\def\ls{\lesssim}
\def\fz{\infty}
\def\az{\alpha}
\def\ez{{\epsilon}}
\def\nn{\mathbb{N}}
\def\fz{\infty}
\def\r{\right}
\def\lf{\left}
\def\rlz{\R\times\R}
\def\mbhh{{[ b, H_1H_2]}}
\def\dyz{{\,dy_1\,dy_2}}
\newcommand{\R}{\mathbb{R}}
\newcommand{\RR}{{\mathbb{R}\times \mathbb{R}}}
\def\ltp{{L^2\,(\R^2)}}
\def\hop{{h^1(\RR)}}
\def\bmop{{{\rm bmo}\,(\RR)}}
\newtheorem{theorem}{Theorem}[section]
\newtheorem{lemma}[theorem]{Lemma}
\newtheorem{coro}[theorem]{Corollary}
\theoremstyle{definition}
\newtheorem{definition}[theorem]{Definition}
\newtheorem{remark}[theorem]{Remark}
\numberwithin{equation}{section}
\DeclareSymbolFont{cyrletters}{OT2}{wncyr}{m}{n}
\DeclareMathSymbol{\Sha}{\mathalpha}{cyrletters}{"58}
\begin{document}

\title[Commutators, Little BMO and Weak Factorization]{Commutators, Little BMO and Weak Factorization}

\date{\today}


\author[X. T. Duong]{Xuan Thinh Duong$^\dagger$}
\address{Xuan Thinh Duong, Department of Mathematics\\
Macquarie University\\
NSW, 2109, Australia}
\email{xuan.duong@mq.edu.au}
\thanks{$\dagger$ Research supported in part by ARC DP 160100153.}

\author[J. Li]{Ji Li$^\ddagger$}
\address{Ji Li, Department of Mathematics\\
Macquarie University\\
NSW, 2109, Australia}
\email{ji.li@mq.edu.au}
\thanks{$\ddagger$ Research supported by ARC DP 160100153 and MQ New Staff Grant}

\author[B. D. Wick]{Brett D. Wick$^\ast$}
\address{Brett D. Wick, Department of Mathematics\\
Washington University -- St. Louis\\
One Brookings Drive\\
St. Louis, MO USA 63130}
\email{wick@math.wustl.edu}
\thanks{$\ast$ Research supported in part by National Science Foundation
DMS grants \# 1560955 and \#1603246.}

\author[D. Yang]{Dongyong Yang$^\star$}
\address{Dongyong Yang, School of Mathematical Sciences\\
Xiamen University \\
Xiamen 361005,  China}
\email{dyyang@xmu.edu.cn}
\thanks{$\star$ Research supported by
 the NNSF of China (Grant No. 11571289) and the State Scholarship Fund of China (No. 201406315078)}

\subjclass[2010]{42B30, 42B20, 42B35}

\keywords{bmo$(\mathbb{R}\times\mathbb{R})$, $h^1(\RR)$, commutator, weak factorization,  Hilbert transform}

\begin{abstract}
In this paper, we provide a direct and constructive proof of weak factorization of $h^1(\RR)$ (the predual of little BMO space bmo$(\mathbb{R}\times\mathbb{R})$ studied by Cotlar-Sadosky and Ferguson-Sadosky),
i.e., for every $f\in h^1(\mathbb{R}\times\mathbb{R})$ there exist sequences $\{\alpha_j^k\}\in\ell^1$ and functions
$g_j^k,h^k_j\in L^2(\mathbb{R}^2)$ such that
\begin{align*}
f=\sum_{k=1}^\infty\sum_{j=1}^\infty\alpha^k_j\Big(\, h^k_j H_1H_2 g^k_j  - g^k_j H_1H_2 h^k_j\Big)
\end{align*}
in the sense of $h^1(\RR)$, where $H_1$ and $H_2$ are the Hilbert transforms on the first and second variable, respectively. Moreover,
the norm $\|f\|_\hop$ is given in terms of $\|g^k_j\|_{L^2(\mathbb{R}^2)}$ and $\|h^k_j\|_{L^2(\mathbb{R}^2)}$.
By duality, this directly implies a lower bound on the norm of the commutator $[b,H_1H_2]$ in terms of $\|b\|_{{\rm bmo}(\mathbb{R}\times\mathbb{R})}$.

Our method bypasses the use of analyticity and the Fourier transform, and hence can be extended to the higher dimension case in an arbitrary $n$-parameter setting for the Riesz transforms.
\end{abstract}

\maketitle

\section{Introduction and Statement of Main Results}

As motivation for this paper we point to two fundamental results in complex analysis and harmonic analysis.  An important result, obtained by Coifman, Rochberg, and Weiss in \cite{crw} shows that for the Hilbert transform $ Hf(x)=\int_{\R} \frac{f(y)}{y-x}dy$ and the commutator between multiplication by $b$ (i.\,e., $M_bf=bf$) and the Hilbert transform, $[b,H]:=M_bH-H M_b$, that:
$$
\left\Vert [b,H]:L^2(\mathbb{R})\to L^2(\mathbb{R})\right\Vert \approx \sup_{Q}\Big( \frac{1}{\left\vert Q\right\vert}\int_Q\Big\vert b(x)-\frac{1}{\vert Q\vert}\int_{Q} b(y)dy\Big\vert^2 dx\Big)^{\frac{1}{2}},
$$
where the supremum is taken over intervals $Q$ in $\R$ and the right-hand side is the well-known BMO$(\R)$ norm.  To obtain this, they used methods of harmonic analysis that were general enough to work for certain Calder\'on--Zygmund operators, and in particular the Riesz transforms: $R_jf(x):=c_n \int_{\mathbb{R}^n} f(y) \frac{x_j-y_j}{\left\vert x-y\right\vert^{n+1}}dy$, $1\leq j\leq n$, and obtained:
$$
\max_{1\leq j\leq n}\left\Vert [b,R_j]:L^2(\mathbb{R}^n)\to L^2(\mathbb{R}^n)\right\Vert \approx \sup_{Q}\Big( \frac{1}{\left\vert Q\right\vert}\int_Q\Big\vert b(x)-\frac{1}{\left\vert Q\right\vert}\int_Q b(y)dy\Big\vert^2 dx\Big)^{\frac{1}{2}},
$$
where the supremum is taken over cubes $Q$ in $\R^n$ and the right-hand side is the well-known BMO$(\R^n)$ norm.  Commutators play an important role in harmonic analysis, complex analysis, and partial differential equations (see for example \cite{ais,clms,kpv}) and have a characterization of their boundedness in terms of the symbol $b$ is extremely useful.

Nehari studied and characterized the boundedness of Hankel operators in \cite{n}.  Recall that $H^2(\mathbb{R}^2_+)$ is the space of functions that are analytic on the upper half-plane and have boundary values belonging to $L^2(\mathbb{R})$.  Let $\mathbb{P}_+:L^2(\mathbb{R})\to H^2(\mathbb{R}^2_+)$ denote the orthogonal projection between these spaces and so we have that $L^2(\mathbb{R})=H^2_+(\mathbb{R}^2_+)\oplus H^2_-(\mathbb{R}^2_+)$ where $H^2_{\pm}(\mathbb{R}^2_+)$ is supported on the positive/negative Fourier frequencies.  Then define the Hankel operator $h_b(f):=\mathbb{P}_-(bf)$ and Nehari's Theorem, stated in modern terminology, is then the relationship:
$
\left\Vert h_b:H^2(\mathbb{R}^2_+)\to H^2_-(\mathbb{R}^2_+)\right\Vert\approx \left\Vert b\right\Vert_{\textnormal{BMO}(\mathbb{R}^2_+)}.
$

There is a strong connection between the results of \cite{crw} and \cite{n}.  To see this recall that we have $H=i\mathbb{P}_+-i\mathbb{P}_-$ where $\mathbb{P}_+$  and $\mathbb{P}_-$  are the projections onto the positive and negative Fourier supports respectively.  It is then a simple computation to show that: $\displaystyle [b,H]=h_b-h_{\overline{b}}^{\ast}$.
As the domains and ranges of the operators $h_b$ and $h_{\overline{b}}^{\ast}$ are orthogonal, Nehari's Theorem and the characterization of commutators can then easily be deduced from one another.

Via $H^1-$BMO duality and some standard functional analysis it is direct to see that the commutator theorem can be translated to the following statement:  For every $f\in H^1(\mathbb{R})$, the real Hardy space, there exist functions $g_j, h_j\in L^2(\mathbb{R)}$ so that $\displaystyle f=\sum_{j=1}^{\infty} g_jHh_j+h_jHg_j$ in the sense of $H^1(\R)$ and
\begin{align*}
\left\Vert f\right\Vert_{H^1(\mathbb{R})}\approx \inf\Big\{\sum_{j=1}^{\infty} \left\Vert g_j\right\Vert_{L^2(\R)} \left\Vert h_j\right\Vert_{L^2(\R)}: f= \sum_{j=1}^{\infty} g_jHh_j+h_jHg_j\Big\},
\end{align*}
where the infimum is taken over all possible representations of $f$ as above (for the definition of $H^1(\mathbb{R})$ see \cite{fs72}).  In fact these factorization results and corresponding commutator results are always equivalent to each other. For more details about the classical Nehari Theorem and background, we refer to the note of Lacey \cite{l} and the references therein.

Extensions of the commutator results and Nehari's Theorem have received lots of attention; in particular we focus on the extensions in the product setting for the little BMO space bmo$(\RR)$, introduced and studied by M. Cotlar and Sadosky \cite{CS2}
in connection with weighted norm inequalities for the product Hilbert transform. For this reason, the space bmo$(\RR)$ was
originally defined in terms of the Hilbert transforms, one for each variable. The characterization of bmo$(\RR)$ in terms of
mean oscillation on rectangles was given later in \cite{CS2}. For our purpose here, we take this characterization of bmo$(\RR)$
as our starting point. Note that in \cite{CS2} and \cite{fs}, they stated the results on bidisc. Here we state the results on $\RR$ and study the real analysis approach.  More precisely, a function $b\in L_{loc}^1(\R^2)$ is in bmo$(\RR)$ if
\begin{align}
\|b\|_{{\rm bmo}(\RR)}:= \sup_{R\subset \RR} \frac1{|R|}\iint_R |b(x_1,x_2)-b_R|dx_1dx_2 <\infty,
\end{align}
where
$$ b_R := \frac1{|R|}\iint_R b(x_1,x_2) dx_1dx_2 $$
is the mean value of $b$ over the rectangle $R$.

It is well known that bmo$(\RR)$ coincides with the space of integrable functions which are uniformly of bounded mean oscillation in each variable separately \cite{CS2}. Moreover, from Ferguson--Sadosky \cite{fs}, we have the following equivalent characterizations for bmo$(\RR)$.
\begin{theorem}[\cite{fs}]
\label{1.1}
Let $b\in L_{loc}^1(\R^2)$. The following conditions are equivalent:
\begin{enumerate}
\item[(i)] $b\in $ bmo$(\RR)$;
\item[(ii)] The commutators $[b,H_1]$ and $[b,H_2]$ are both bounded on $L^2(\R^2)$;
\item[(iii)] The commutator $[b,H_1H_2]$ is bounded on $L^2(\R^2)$.
\end{enumerate}
\end{theorem}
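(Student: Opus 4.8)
The plan is to establish the three equivalences by running the cycle (i)$\Rightarrow$(ii)$\Rightarrow$(iii)$\Rightarrow$(i), with the last arrow carrying essentially all the work. First, for (i)$\Rightarrow$(ii) I would start from the characterization of $\bmop$ recalled above from \cite{CS2}: a function lies in $\bmop$ exactly when it is uniformly of bounded mean oscillation in each variable separately, quantitatively $\|b\|_{\bmop}\approx\esup_{x_2}\|b(\cdot,x_2)\|_{\mathrm{BMO}(\R)}+\esup_{x_1}\|b(x_1,\cdot)\|_{\mathrm{BMO}(\R)}$. Since $H_1$ acts only in the first variable, the commutator $[b,H_1]$ acts fiberwise, $([b,H_1]f)(\cdot,x_2)=[b(\cdot,x_2),H]\bigl(f(\cdot,x_2)\bigr)$ for a.e.\ $x_2$, so Fubini gives
\begin{align*}
\bigl\|[b,H_1]\bigr\|_{L^2(\R^2)\to L^2(\R^2)}=\esup_{x_2}\bigl\|[b(\cdot,x_2),H]\bigr\|_{L^2(\R)\to L^2(\R)};
\end{align*}
here ``$\le$'' is immediate and ``$\ge$'' follows by concentrating an almost extremal function on the slices where the right-hand side is nearly attained (a routine measurable selection, as in the theory of direct integrals of operators). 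By the one-parameter Coifman--Rochberg--Weiss theorem \cite{crw} quoted above the right-hand side is comparable to $\esup_{x_2}\|b(\cdot,x_2)\|_{\mathrm{BMO}(\R)}$, and symmetrically for $H_2$; together with the $\bmop$ characterization this yields $\|[b,H_1]\|+\|[b,H_2]\|\approx\|b\|_{\bmop}$, hence in particular (i)$\Rightarrow$(ii) (and in fact (i)$\Leftrightarrow$(ii)).

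Next, for (ii)$\Rightarrow$(iii) I would use only the algebraic identity
\begin{align*}
[b,H_1H_2]=[b,H_1]H_2+H_1[b,H_2],
\end{align*}
obtained by inserting $\pm H_1(bH_2f)$ into $bH_1H_2f-H_1H_2(bf)$, together with the boundedness of $H_1$ and $H_2$ on $L^2(\R^2)$; this gives $\|[b,H_1H_2]\|\lesssim\|[b,H_1]\|+\|[b,H_2]\|$.

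The crux is (iii)$\Rightarrow$(i), which I would handle by a two-parameter version of Janson's test-function argument. Fix a rectangle $R$; by translation invariance of $H_1,H_2$ and of $\bmop$ we may assume $R=I\times J$ is centered at the origin. Let $R'=I'\times J'$ be the twin obtained by translating $I$ and $J$ one side length in the positive direction, so that $\overline R\cap\overline{R'}=\emptyset$ and, on $R\times R'$, the kernel $\bigl((y_1-x_1)(y_2-x_2)\bigr)^{-1}$ of $H_1H_2$ has a fixed sign and is of size $\approx(|I|\,|J|)^{-1}$. The structural point is that its reciprocal, the \emph{polynomial} $(y_1-x_1)(y_2-x_2)$, splits --- after dividing by $|R'|=|I|\,|J|$ --- as a finite sum
\begin{align*}
\frac{(y_1-x_1)(y_2-x_2)}{|I|\,|J|}=\sum_{k=1}^{4}p_k(x)\,q_k(y),
\end{align*}
where each $p_k$ is bounded on $R$ and each $q_k$ on $R'$ (the centering of $R$ is exactly what keeps these monomials bounded). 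Now set $\sigma:=\operatorname{sgn}(b-b_R)$ and --- the decisive choice --- $s:=\sigma-\frac{1}{|R|}\int_R\sigma$, the mean-zero modification. Pairing the commutator against $F_k:=q_k\mathbf{1}_{R'}$ and $G_k:=p_k\,s\,\mathbf{1}_R$, and recalling that $\sum_k p_k(x)q_k(y)$ times the kernel equals $|R'|^{-1}$ on $R\times R'$, one computes
\begin{align*}
\sum_{k=1}^{4}\bigl\langle[b,H_1H_2]F_k,\,G_k\bigr\rangle
&=\frac{1}{|R'|}\iint_{R\times R'}\bigl(b(x)-b(y)\bigr)s(x)\,dy\,dx\\
&=\int_R\bigl(b(x)-b_{R'}\bigr)s(x)\,dx=\int_R\bigl(b(x)-b_R\bigr)s(x)\,dx\\
&=\int_R|b-b_R|,
\end{align*}
where the second line integrates out $y$ over $R'$, and the third line uses $\int_R s=0$ (so the constant $b_R-b_{R'}$ drops out), $\int_R(b-b_R)=0$, and $\sigma=\operatorname{sgn}(b-b_R)$. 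Since each term on the left is at most $\|[b,H_1H_2]\|\,\|F_k\|_{L^2(\R^2)}\|G_k\|_{L^2(\R^2)}\lesssim\|[b,H_1H_2]\|\,|R|$, we conclude $\frac{1}{|R|}\int_R|b-b_R|\lesssim\|[b,H_1H_2]\|$ for every $R$, i.e.\ $b\in\bmop$ with $\|b\|_{\bmop}\lesssim\|[b,H_1H_2]\|$. This closes the cycle, and in particular gives (iii)$\Rightarrow$(ii) as well.

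The main obstacle is the implication (iii)$\Rightarrow$(i), and within it the single most important --- and most easily overlooked --- ingredient is the mean-zero normalization of $s$: testing against $\operatorname{sgn}(b-b_R)\mathbf{1}_R$ itself would leave an uncontrolled term $(b_R-b_{R'})\int_R\operatorname{sgn}(b-b_R)$ and produce only the circular estimate $\int_R|b-b_R|\lesssim\|[b,H_1H_2]\|\,|R|+\int_{R'}|b-b_{R'}|$; subtracting the average of $\operatorname{sgn}(b-b_R)$ annihilates that term at no cost, since $b-b_R$ already integrates to zero over $R$. A secondary technical point is the centering of $R$ at the origin, needed to keep $p_k$ and $q_k$ bounded. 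Finally, exactly the same scheme applies to the $n$-parameter Riesz transforms mentioned in the abstract: for the Hilbert transforms the expansion above is a finite sum of monomials, so no Fourier transform or Fourier series enters, whereas for the Riesz transforms the reciprocal of each tensor factor of the kernel is no longer a polynomial but is still smooth and non-vanishing on a suitable ball, so one replaces $\sum_{k=1}^{4}p_k(x)q_k(y)$ by an absolutely convergent tensor Fourier expansion --- Janson's original device --- and every estimate persists with $\sum_k$ replaced by the $\ell^1$ norm of the Fourier coefficients.
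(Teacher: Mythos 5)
Your proof is correct, but for the key implication (iii)$\Rightarrow$(i) it takes a genuinely different route from the paper. Your chain (i)$\Rightarrow$(ii)$\Rightarrow$(iii) is essentially the paper's Lemma~\ref{lemma aux1}: the slicewise characterization $\|b\|_{\bmop}\approx\sup_{x_1}\|b(x_1,\cdot)\|_{\mathrm{BMO}(\R)}+\sup_{x_2}\|b(\cdot,x_2)\|_{\mathrm{BMO}(\R)}$, the one-parameter Coifman--Rochberg--Weiss theorem applied fiberwise, and the identity $[b,H_1H_2]=H_1[b,H_2]+[b,H_1]H_2$. For (iii)$\Rightarrow$(i), however, the paper never tests the commutator directly: it approximates each rectangle atom $a$ by a bilinear form $\Pi(f,g)$ with $f=\mathbf{1}_{\widetilde R}$ for a far-away translate $\widetilde R$ (Theorem~\ref{thm:recfac}), iterates this to obtain the weak factorization of $h^1(\RR)$ (Theorem~\ref{thm: main}), and only then deduces $\|b\|_{\bmop}\lesssim\|[b,H_1H_2]\|_{L^2\to L^2}$ by duality (Corollary~\ref{thm: lower bound}). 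Your two-parameter Janson-style argument --- splitting $(y_1-x_1)(y_2-x_2)/|R|$ into four monomials bounded on $R$ and $R'$ respectively and pairing against the mean-zero modification $s$ of $\operatorname{sgn}(b-b_R)$ --- is a valid and more direct proof of the same lower bound: the computation $\sum_k\langle[b,H_1H_2]F_k,G_k\rangle=\int_R|b-b_R|$ checks out, and you are right that subtracting the average of the sign function is the decisive normalization. What you lose relative to the paper is the weak factorization itself, which is its main theorem (the commutator bound is only a corollary there); what you gain is brevity for this single implication. Two details to repair: translating by exactly one side length makes the closures of $R$ and $R'$ touch at a corner, so $\overline R\cap\overline{R'}\neq\emptyset$ and the kernel is not uniformly of size $|R|^{-1}$ on $R\times R'$ --- your algebraic cancellation makes this harmless, but it is cleaner to translate by, say, two side lengths, which keeps the monomials bounded, separates the supports, and makes the kernel representation of the pairings immediate; and note that your proposed extension to Riesz transforms reintroduces a Fourier-series expansion of the reciprocal kernel, whereas the paper's construction avoids Fourier methods altogether, which is precisely the advantage the authors emphasize.
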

We note that bmo$(\RR)$ can also be equivalently characterized by big Hankel operators and by certain Carleson measures.
For the details, we refer to \cite{fs}.

It was shown in \cite{CS2} that the predual of bmo$(\RR)$ coincides with $H^1_{Re}(\mathbb R)\otimes L^1(\mathbb R) + H^1_{Re}(\mathbb R)\otimes L^1(\mathbb R)$. Based on the result in \cite{CS2}, Ferguson--Sadosky \cite{fs} obtained the weak factorization for
$H^1_{Re}(\mathbb R)\otimes L^1(\mathbb R) + H^1_{Re}(\mathbb R)\otimes L^1(\mathbb R)$.

The aim of this paper is to provide a direct and constructive proof for the weak factorization for predual of bmo$(\RR)$, which
implies the equivalence of (i) and (iii) in Theorem \ref{1.1} directly and our result here bypasses the use of Fourier transform and hence can be extended to the higher dimension case in an arbitrary $n$-parameter setting for the Riesz transforms.
To get this, we note that in \cite{fs}, Ferguson-Sadosky also showed that the predual of  bmo$(\RR)$ can be characterised in terms of rectangular atoms.
\begin{definition}[\cite{fs}]\label{def atom}
An atom on $\RR$ is a function
$a\in L^\infty(\R^2)$ supported on a rectangle $R \subset \RR$ with $\|a\|_\infty \leq |R|^{-1}$ and satisfying the cancellation property
$$  \int_{\R^2} a(x_1,x_2)dx_1dx_2=0. $$
Let $Atom(\RR)$ denote the collection of all such atoms.
\end{definition}
\begin{definition}[\cite{fs}]\label{def little h1}
The atomic Hardy space $h^1(\RR)$ is defined as the set of functions of the form
\begin{align}\label{atomic decomposition1}
f=\sum_i\alpha_i a_i
\end{align}
with $\{a_i\}_i\subset Atom(\RR)$, $\{\alpha_i\}_i \subset \mathbb{C}$ and $\sum_i|\alpha_i| <\infty$.
Moreover, $h^1(\RR)$  is equipped with the norm $\|f\|_{h^1(\RR)}:= \inf \sum_i|\alpha_i|$ where
the infimum is taken over all possible decompositions of $f$ in the form \eqref{atomic decomposition1}.
\end{definition}

Then we have the following result from \cite{fs} on the duality of the atomic Hardy space $h^1$ and little bmo, whose proof will be sketchedt in Section 2 for the convenience of the reader.
\begin{theorem}[\cite{fs}]\label{thm fs} A function $b\in L^1_{loc}(\R^2)$  is in {\rm bmo}$(\RR)$ if and only if
$$ \sup_{a\in Atom(\RR)} \int_{\R^2} b(x_1,x_2)a(x_1,x_2)dx_1dx_2 <\infty. $$
Consequently, the predual of {\rm bmo}$(\RR)$ is $h^1(\RR)$.
\end{theorem}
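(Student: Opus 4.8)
The plan is to reduce both assertions to the classical atomic $H^1$--BMO duality, with bounded rectangles $R\subset\RR$ playing the role of cubes; since each atom in $Atom(\RR)$ carries only a \emph{single} cancellation condition, no genuinely product-flavoured obstruction appears. I would split the proof into (A) the norm equivalence $\|b\|_{\bmop}\approx\sup_{a\in Atom(\RR)}\big|\int_{\R^2}ba\big|$ and (B) the identification $(\hop)^{*}=\bmop$.

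For (A), the easy bound $\sup_{a}\big|\int_{\R^2}ba\big|\le\|b\|_{\bmop}$ follows from the cancellation $\int a=0$: for an atom $a$ supported on $R$ one has $\int_{\R^2}ba=\int_R(b-b_R)a$, hence $\big|\int_{\R^2}ba\big|\le\|a\|_\infty\int_R|b-b_R|\le\|b\|_{\bmop}$. For the reverse, fix a bounded rectangle $R$, set $\phi:=\overline{(b-b_R)}/|b-b_R|$ on $R\cap\{b\neq b_R\}$ and $\phi:=0$ elsewhere, and test against $a_R:=\tfrac1{2|R|}\big(\phi-\tfrac1{|R|}\int_R\phi\big)\mathbf 1_R$: then $\|a_R\|_\infty\le|R|^{-1}$, $\int a_R=0$, and $\int_{\R^2}ba_R=\tfrac1{2|R|}\int_R|b-b_R|$, whence $\tfrac1{|R|}\int_R|b-b_R|\le2\sup_a\big|\int_{\R^2}ba\big|$; taking the supremum over $R$ finishes (A), and the complex-valued case is verbatim.

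For (B), one inclusion is immediate from (A): each $b\in\bmop$ defines a linear functional on finite atomic sums by $\ell_b\big(\sum_i\alpha_ia_i\big):=\sum_i\alpha_i\int_{\R^2}ba_i$, which by (A) satisfies $|\ell_b(f)|\le\|b\|_{\bmop}\|f\|_{\hop}$ and hence extends continuously to $\hop$. Conversely, given $\ell\in(\hop)^{*}$, fix rectangles $Q_k\nearrow\R^2$ and restrict $\ell$ to $L^2_0(Q_k)$, the mean-zero $L^2$ functions supported in $Q_k$; since $\|g\|_{\hop}\lesssim|Q_k|^{1/2}\|g\|_{\ltp}$ on this subspace, the Riesz representation theorem on $L^2(Q_k)$ produces $b^{(k)}\in L^2(Q_k)$, unique modulo additive constants, with $\ell(g)=\int_{\R^2}b^{(k)}g$ for all such $g$. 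Choosing the constants so that successive $b^{(k)}$ agree on overlaps yields a single $b\in L^1_{loc}(\R^2)$ with $\ell(g)=\int_{\R^2}bg$ for every compactly supported mean-zero $g\in L^\infty$; since these are exactly the scalar multiples of finite atomic sums, which are dense in $\hop$ by Definition \ref{def little h1}, we get $\ell=\ell_b$. Finally $\sup_a\big|\int_{\R^2}ba\big|=\sup_a|\ell(a)|\le\|\ell\|_{(\hop)^{*}}$ because $\|a\|_{\hop}\le1$, so (A) gives $b\in\bmop$ with $\|b\|_{\bmop}\le2\|\ell\|$.

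The only step going beyond routine bookkeeping is the embedding $L^2_0(Q)\hookrightarrow\hop$, i.e.\ the fact that a mean-zero $L^2$ function supported on a rectangle decomposes into $\infty$-atoms with $\ell^1$-summable coefficients bounded by $C|Q|^{1/2}\|g\|_{\ltp}$; this is the standard Calder\'on--Zygmund/atom-equivalence argument, here run via the one-parameter dyadic decomposition on the subrectangles of $Q$, one vanishing moment being all that is required. Everything else --- the well-definedness of $\ell_b$, the gluing of the $b^{(k)}$, the density of finite atomic sums in $\hop$, and the interpretation of $\int_{\R^2}bf$ for general $f\in\hop$ as the value of the extended functional --- is routine.
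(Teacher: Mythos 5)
Your proposal is correct, and in substance it is the same route the paper takes, namely the classical Coifman--Weiss atomic duality; the difference is that you spell out the steps the paper delegates to its references. The paper's proof is only a sketch: it introduces $q$-atoms, reduces everything to the equivalence $h^{1,q}(\RR)=h^{1,\infty}(\RR)$ (proved by a Whitney-covering induction as in Coifman--Weiss), and then cites \cite{CW2} and \cite{J} for the duality of $h^{1,2}$ with ${\rm bmo}(\RR)$. You instead prove part (A) directly by testing $b$ against the explicit atom built from $\overline{(b-b_R)}/|b-b_R|$, and part (B) by Riesz representation on $L^2_0(Q_k)$ for an exhausting sequence of rectangles plus gluing; the one non-routine ingredient you rely on, the embedding $\|g\|_{\hop}\lesssim|Q|^{1/2}\|g\|_{\ltp}$ for mean-zero $g\in L^2(Q)$, is exactly the paper's $h^{1,2}=h^{1,\infty}$ step, which you implement via the martingale/dyadic decomposition of subrectangles of $Q$ (perfectly adequate here, since only one vanishing moment and a fixed eccentricity are involved) where the paper invokes the Whitney-covering argument of \cite[Lemma (3.9)]{CW2}. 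The only point you gloss over is in the forward inclusion of (B): the bound $|\ell_b(f)|\le\|b\|_{\bmop}\|f\|_{\hop}$ for a finite atomic sum $f$ does not follow from (A) applied to one representation, because $\|f\|_{\hop}$ is an infimum over all, possibly infinite, representations, and for unbounded $b\in L^1_{loc}$ one cannot freely interchange $\int b\sum_i\alpha_i a_i$ with $\sum_i\alpha_i\int b\,a_i$; the standard repair is to truncate ($b_N:=\max(\min(b,N),-N)$, with $\|b_N\|_{\bmop}\lesssim\|b\|_{\bmop}$) and pass to the limit on finite sums. This is a known wrinkle of the ``standard argument'' the paper itself cites rather than proves, so it does not change the verdict that your approach is essentially the paper's, carried out in more detail.
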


Our main result of this article is the following.

\begin{theorem}\label{thm: main}
For every $f\in h^1(\RR)$, there exist sequences $\{\alpha_j^k\}_{j}\in\ell^1$ and functions
$g_j^k,h^k_j\in\ltp$ such that
\begin{align}\label{represent of H1}
f=\sum_{k=1}^\fz\sum_{j=1}^\fz\alpha^k_j\, \Pi\lf(g^k_j,h^k_j\r)
\end{align}
in the sense of $h^1(\RR)$, where $\Pi(f,g )$ is the bilinear form defined as
\begin{align}\label{bilinear form}
\Pi(g,h ) :=  h H_1H_2 g - g H_1H_2 h.
\end{align}
Moreover, we have that 
\begin{eqnarray*}
\|f\|_{h^1(\RR)}\approx\inf\Big\{\sum_{k=1}^\fz\sum_{j=1}^\fz\lf|\az^k_j\,\r|\lf\|g^k_j\r\|_\ltp\lf\|h^k_j\r\|_\ltp\Big\},
\end{eqnarray*}
where the infimum is taken over all representations of $f$ in the form \eqref{represent of H1} and the implicit constants
are independent of $f$.
\end{theorem}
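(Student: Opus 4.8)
The two inequalities in the norm equivalence call for rather different arguments, so I would treat them separately.

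\emph{The upper bound.} The estimate $\|f\|_{\hop}\lesssim\inf\{\sum_{k,j}|\az^k_j|\,\|g^k_j\|_\ltp\|h^k_j\|_\ltp\}$ reduces, by the triangle inequality in $\hop$, to the single bound $\|\Pi(g,h)\|_{\hop}\lesssim\|g\|_\ltp\|h\|_\ltp$. For $g,h\in C_c^\infty(\R^2)$ the function $\Pi(g,h)$ is bounded with compact support, and moving $H_1,H_2$ across the integral (using $H_i^{\ast}=-H_i$ and that $H_1,H_2$ commute) gives both $\int_{\R^2}\Pi(g,h)\,\dxz=0$ and, for $b\in\bmop$,
\begin{align*}
\int_{\R^2}\Pi(g,h)\,b\,\dxz=-\int_{\R^2}\bigl([b,H_1H_2]g\bigr)\,h\,\dxz;
\end{align*}
in particular $\Pi(g,h)\in\hop$. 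Combining this identity, Theorem~\ref{thm fs}, and the ``easy'' implication (i)$\Rightarrow$(iii) of Theorem~\ref{1.1} in its quantitative form $\|[b,H_1H_2]\|_{\ltp\to\ltp}\lesssim\|b\|_{\bmop}$, one obtains $\|\Pi(g,h)\|_{\hop}=\sup_{\|b\|_{\bmop}\le1}|\langle\Pi(g,h),b\rangle|\lesssim\|g\|_\ltp\|h\|_\ltp$. A density argument then extends this bound, and the membership $\Pi(g,h)\in\hop$, to arbitrary $g,h\in\ltp$, and also shows that any series \eqref{represent of H1} with $\sum_{k,j}|\az^k_j|\,\|g^k_j\|_\ltp\|h^k_j\|_\ltp<\infty$ converges in $\hop$.

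\emph{Reduction of the lower bound to one atom.} The plan is to prove the following and then iterate: there are absolute constants $C\ge1$ and $\theta\in(0,1)$ so that every $a\in Atom(\RR)$, supported on a rectangle $R=I\times J$, can be written as $a=\az\,\Pi(g,h)+e$ with $|\az|\,\|g\|_\ltp\|h\|_\ltp\le C$ and $\|e\|_{\hop}\le\theta$. Granting this, write $f=\sum_i\lam_i a_i$ with $\sum_i|\lam_i|\le(1+\ez)\|f\|_{\hop}$, apply the one-atom statement to each $a_i$, collect the remainders into $f^{(1)}:=\sum_i\lam_i e_i$ (so $\|f^{(1)}\|_{\hop}\le\theta(1+\ez)\|f\|_{\hop}$), and repeat; choosing $\ez$ with $\theta(1+\ez)<1$ makes the tails tend to $0$ in $\hop$ and the resulting doubly indexed family of factors geometrically summable with total $\lesssim\|f\|_{\hop}$. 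To construct $g,h$ I would fix a large parameter $N$, take $\widetilde I,\widetilde J$ to be the translates of $I,J$ lying at distance $\sim N\ell(I),\sim N\ell(J)$ to one side, set $\widetilde R:=\widetilde I\times\widetilde J$, and put $g:=a$, $h:=|\widetilde R|^{-1}\mathbf 1_{\widetilde R}$, so that $\|g\|_\ltp\|h\|_\ltp\le|R|^{-1}$. Since $R\cap\widetilde R=\emptyset$, $\Pi(g,h)$ is supported on $R\cup\widetilde R$, equal to $-a(x)\,H_1H_2h(x)$ on $R$ and to $h(x)\,H_1H_2a(x)$ on $\widetilde R$. Two computations control these: (a) by the tensor structure $H_1H_2\mathbf 1_{\widetilde R}=(H\mathbf 1_{\widetilde I})\otimes(H\mathbf 1_{\widetilde J})$, the function $c(x):=|R|\,H_1H_2h(x)=(H\mathbf 1_{\widetilde I})(x_1)(H\mathbf 1_{\widetilde J})(x_2)$ has a fixed sign and size $\sim N^{-2}$ on $R$, with oscillation $O(N^{-1})$ relative to its size (since $\ell(I)\,|\partial_{x_1}H\mathbf 1_{\widetilde I}|\sim N^{-2}$ on $I$, likewise in $x_2$); (b) using the single cancellation $\int_{\R^2}a\,\dxz=0$ together with the product kernel of $H_1H_2$ (subtract the kernel evaluated at the center of $R$) gives $|H_1H_2a(x)|\lesssim N^{-3}|R|^{-1}$ for $x\in\widetilde R$. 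Letting $c$ be the value of $c(\cdot)$ at the center of $R$ and $\az:=-|R|\,c^{-1}$, one gets $|\az|\,\|g\|_\ltp\|h\|_\ltp\le|c|^{-1}\sim N^2=:C$, and
\begin{align*}
e:=a-\az\,\Pi(g,h)=\frac{c-c(x)}{c}\,a(x)\,\mathbf 1_R(x)+c^{-1}\mathbf 1_{\widetilde R}(x)\,H_1H_2a(x),
\end{align*}
the two pieces being supported on $R$ and $\widetilde R$ and of size $O(N^{-1}|R|^{-1})$ in $L^\infty$ by (a) and (b).

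\emph{The main obstacle} is that $e$ is not a multiple of an atom: only $\int_{\R^2}e\,\dxz=0$ holds (inherited from $a$ and $\Pi(g,h)$), not the vanishing of the two pieces separately. I would handle this by the telescoping correction
\begin{align*}
e=\bigl(e\mathbf 1_R-m\mathbf 1_R\bigr)+\bigl(e\mathbf 1_{\widetilde R}+m\mathbf 1_{\widetilde R}\bigr)+m\bigl(\mathbf 1_R-\mathbf 1_{\widetilde R}\bigr),\qquad m:=\frac1{|R|}\int_R e\,\dxz,
\end{align*}
in which the first two summands are mean-zero, supported on a single rectangle, and $O(N^{-1}|R|^{-1})$ in $L^\infty$, hence have $\hop$-norm $\lesssim N^{-1}$, while $|m|\lesssim N^{-1}|R|^{-1}$ and $\bigl\||R|^{-1}\mathbf 1_R-|\widetilde R|^{-1}\mathbf 1_{\widetilde R}\bigr\|_{\hop}\lesssim\log N$ (peel off one variable at a time and chain one-dimensional atoms), so $\|m(\mathbf 1_R-\mathbf 1_{\widetilde R})\|_{\hop}\lesssim N^{-1}\log N$. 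Altogether $\|e\|_{\hop}\lesssim N^{-1}\log N$, which is $<1$ once $N$ is fixed large enough; this is the constant $\theta$, and it closes the induction. I expect the delicate points to be exactly (b)---the decay of the two-parameter double Hilbert transform of a one-cancellation rectangular atom---and the $\log N$ bound for the difference of normalized indicators; the rest is bookkeeping. Finally, the lower bound $\|b\|_{\bmop}\lesssim\|[b,H_1H_2]\|_{\ltp\to\ltp}$ announced in the abstract follows by pairing a representation \eqref{represent of H1} of an arbitrary $f\in\hop$ against $b$, using the commutator identity above and the norm equivalence just established.
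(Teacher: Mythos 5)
Your proposal is correct and follows essentially the same route as the paper: the upper bound via $h^1$--bmo duality together with the commutator upper bound (as in Lemma \ref{lemma aux2}), and the lower bound by approximating each rectangular atom with $\Pi(g,h)$ built from the atom and the indicator of a congruent rectangle shifted by $\sim N\ell(I)$, $\sim N\ell(J)$, with the same $N^{-2}$ size and $N^{-3}$ oscillation/decay estimates (as in Theorem \ref{thm:recfac}) and the same geometric iteration over atomic decompositions. Your handling of the remainder (mean-zero pieces on $R$ and $\widetilde R$ plus a chained difference of normalized indicators costing $\log N$) is only a minor repackaging of the paper's telescoping over dyadic dilates in Lemma \ref{lem:H1Est}, yielding the same $\log N/N$ bound.
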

\begin{remark}\label{remark main compact}
The functions $g_j^k$ and $h^k_j$ in the main theorem above 
are actually in $L^\infty(\R^2)$ with compact support.
\end{remark}

By duality, we obtain the lower bound of the  commutator $[b,H_1H_2]$, which was known from the work of Ferguson and Sadosky in \cite{fs} (see Theorem \ref{1.1}) .
\begin{coro}\label{thm: lower bound}
Let $b\in L^1(\R^2)$.
 If $\mbhh$ is bounded on $\ltp$, then we get that $b\in\bmop$ and there exists a constant $C$ so that
\begin{eqnarray*}
\|b\|_{\bmop}&\le& C\left\|\mbhh :\ltp\to\ltp\right\Vert.
\end{eqnarray*}
\end{coro}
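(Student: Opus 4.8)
The plan is to deduce the lower bound directly from the weak factorization in Theorem \ref{thm: main} together with the $h^1$--bmo duality of Theorem \ref{thm fs}, via a standard duality pairing argument. First I would fix $b\in L^1(\R^2)$ such that $\mbhh=[b,H_1H_2]$ is bounded on $\ltp$, and take an arbitrary atom $a\in Atom(\RR)$ supported on a rectangle $R$. Since $a\in L^\infty(\R^2)$ with compact support, Theorem \ref{thm: main} (and Remark \ref{remark main compact}) provides a weak factorization $a=\sum_{k,j}\az^k_j\,\Pi(g^k_j,h^k_j)$ with $g^k_j,h^k_j\in\ltp$ (indeed bounded with compact support) and $\sum_{k,j}|\az^k_j|\,\|g^k_j\|_\ltp\|h^k_j\|_\ltp\ls \|a\|_{h^1(\RR)}\le 1$, the last inequality because a single atom has $h^1$ norm at most $1$.

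Next I would compute the pairing $\int_{\R^2} b\,a\dxz$ by expanding $a$ through its factorization. The key algebraic identity is that, since $H_1$ and $H_2$ are (up to sign) skew-adjoint on $\ltp$ and $H_1H_2$ commutes with itself,
\begin{align*}
\int_{\R^2} b\,\Pi(g,h)\dxz
&=\int_{\R^2} b\,\big(h\,H_1H_2 g-g\,H_1H_2 h\big)\dxz\\
&=\int_{\R^2} \big(bH_1H_2 g - H_1H_2(bg)\big)h\dxz
=\int_{\R^2}\big([b,H_1H_2]g\big)\,h\dxz,
\end{align*}
where one uses $\int (H_1H_2 g)(bh)=\int g\,H_1H_2(bh)$ to move the operator off of $g$ and onto $bh$, and then recombines. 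Hence by Cauchy--Schwarz,
\begin{align*}
\Big|\int_{\R^2} b\,\Pi(g^k_j,h^k_j)\dxz\Big|
\le \big\|[b,H_1H_2]g^k_j\big\|_\ltp\,\|h^k_j\|_\ltp
\le \big\|\mbhh:\ltp\to\ltp\big\|\,\|g^k_j\|_\ltp\|h^k_j\|_\ltp.
\end{align*}
Summing against $\az^k_j$ and using the $\ell^1$ control above gives $\big|\int_{\R^2} b\,a\dxz\big|\ls \|\mbhh:\ltp\to\ltp\|$, uniformly over all atoms $a$. By Theorem \ref{thm fs} this shows $b\in\bmop$ with $\|b\|_{\bmop}\le C\|\mbhh:\ltp\to\ltp\|$.

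The main technical point to handle carefully is the justification of interchanging the (infinite) sum over $k,j$ with the integration against $b$, and the validity of the adjoint/commutator identity above. Since $b\in L^1$ and the $g^k_j,h^k_j$ are bounded with compact support, each term $\int b\,\Pi(g^k_j,h^k_j)$ is a genuinely convergent integral, and the series $\sum_{k,j}|\az^k_j|\,\|g^k_j\|_\ltp\|h^k_j\|_\ltp<\infty$ together with the per-term bound gives absolute convergence, so Fubini/dominated convergence applies; I would also invoke the fact that $a=\sum_{k,j}\az^k_j\Pi(g^k_j,h^k_j)$ holds in $h^1(\RR)\subset L^1(\R^2)$, so the pairing with $b\in\bmop$ (equivalently, with $b$ as a bounded linear functional obtained once we know $b$ acts boundedly on atoms — here we use the a priori finiteness coming from the commutator bound applied termwise) is legitimate. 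This is essentially the only obstacle; everything else is the soft duality argument that converts a factorization statement into a norm lower bound.
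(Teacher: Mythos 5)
Your proposal is correct and follows essentially the same route as the paper: pair $b$ against the weak factorization of Theorem \ref{thm: main}, use the identity \eqref{claim ee} to rewrite $\langle b,\Pi(g^k_j,h^k_j)\rangle$ as a commutator pairing, apply Cauchy--Schwarz termwise, and conclude via the duality of Theorem \ref{thm fs}. The only difference is cosmetic: you test $b$ against individual atoms and invoke the atom-testing criterion, whereas the paper pairs with arbitrary compactly supported $f\in h^1(\RR)$ and finishes by density; the sum--integral interchange you flag is treated no more rigorously in the paper's own proof.
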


We further remark that in Theorem \ref{thm: main} and Corollary \ref{thm: lower bound}  it is possible to change $L^2$ to $L^p$ for $1<p<\infty$ and to replace the factorization in terms of $L^p$ and $L^q$, where $\frac{1}{p}+\frac{1}{q}=1$.  We leave these standard modifications to the reader.  Also, as can be seen from the proofs given below, the role of the Hilbert transforms play no substantive role and in fact work for the Riesz transforms just as easily.  In the interest of ease of presentation, we have focused on the proof with the Hilbert transforms and leave the direct modifications again to the reader.

We also point out that the results in Corollary \ref{thm: lower bound} can be seen as special cases of the work in \cite{ops}, where Ou et al. first proved the lower bound for commutators with respect to certain BMO spaces (using the ideas from \cite{fl,lppw}) and then
obtained the weak factorization for the predual of their BMO space in the form $H^1_{Re}(\R^{(d_1,d_2)})\otimes L^1(\R^{d_3}) +  L^1(\R^{d_1})\otimes H^1_{Re}(\R^{(d_2,d_3)})$ by duality. For more details, we refer to Section 6 in \cite{ops}.

\section{Weak factorization of the product Hardy space $h^1(\RR)$}

In this section we will first sketch the proof of Theorem \ref{thm fs}, and then we
 provide the proof of the  weak factorization for the predual of bmo$(\RR)$ characterised by
rectangular atoms (as in Definition \ref{def little h1}). We adapt the idea from \cite{u} (see also a recent refinement of the idea in \cite{dlwy}) to our current product setting for atoms and for the bmo defined via rectangles. The main approach here is to approximate each $h^1(\RR)$ atom $a$
by a related bilinear form $\Pi(f,g)$ with two $L^2(\R^2)$ functions $f$ and $g$ constructed with respect to $a$.

\begin{proof}[\bf Sketch of the proof of Theorem \ref{thm fs}]
We first consider the definition of $h^1(\rlz)$ via $q$-atoms.
Suppose $q\in(1,\infty]$. A $q$-atom on $\rlz$ is a function
$a\in L^q(\R^2)$ supported on a rectangle $R \subset \rlz$ with $\|a\|_{L^q(\R^2)} \leq |R|^{{1\over q}-1}$ and satisfying the cancellation property
$$  \int_{\R\times\R} a(x_1,x_2) dx_1dx_2=0. $$
Let $Atom_q(\rlz)$ denote the collection of all such atoms.
The atomic Hardy space $h^{1,q}(\rlz)$ is defined as the set of functions of the form
\begin{align}\label{atomic decomposition}
f=\sum_i\alpha_i a_i
\end{align}
with $\{a_i\}_i\subset Atom_q(\rlz)$, $\{\alpha_i\}_i \subset \mathbb{C}$ and $\sum_i|\alpha_i| <\infty$.
Moreover, $h^{1,q}(\rlz)$  is equipped with the norm $\|f\|_{h^{1,q}(\rlz)}:= \inf \sum_i|\alpha_i|$ where
the infimum is taken over all possible decompositions of $f$ in the form \eqref{atomic decomposition}.

Next, it suffices to prove that for $q\in(1, \fz)$,
the spaces $h^{1,q}(\rlz)$ and $h^{1,\,\fz}(\rlz)$
coincide with equivalent norms. Assuming that this is true at the moment,
then to prove the duality of $h^1(\rlz)$ with bmo$(\rlz)$, we just need to show
the dual space of $h^{1,2}(\rlz)$ is bmo$(\rlz)$. This follows from a standard argument, see for example \cite{CW2}, also \cite[Section II, Chapter 3]{J}. 

Concerning the equivalence of  the spaces $h^{1,q}(\rlz)$ and $h^{1,\,\fz}(\rlz)$, we first point out that
 the inclusion $h^{1,\,\fz}(\rlz)\subset h^{1,q}(\rlz)$ for
 $q\in(1, \fz)$ is obvious, since an $\infty$-atom must be a $q$-atom for all $q\in(1,\infty)$. Thus, we only need to establish the
converse. We do so by showing that
any $(1,q)$-atom $a$ with supp$(a)\subset R_0$,
$b:=|R_0|a$  has an atomic decomposition
$b=\sum_{i=0}^\fz \alpha_ib_i$, where each $b_i$, $i\in\mathbb{Z}_+$,
is a $(1,\fz)$-atom and
$\sum_{i=0}^\fz|\alpha_i|\ls 1.$
Actually, this follows from a standard induction argument (see for example \cite{CW2}) using the Whitney covering lemma and a variant of the argument in \cite[Lemma (3.9)]{CW2}.
\end{proof}

\begin{theorem} \label{thm:recfac} Let $\epsilon$ be an arbitrary positive number.  Let $a(x_1,x_2)$ be
an atom as defined in Definition \ref{def atom}.
Then there exist $f,g \in \ltp$  and a constant $C(\epsilon)$ depending only on $\epsilon$ such that
\[ \| a  - \Pi(f,g) \|_{h^1(\mathbb{R} \times \mathbb{R})} < \epsilon, \]
where $\| f \|_{\ltp} \|g \|_{\ltp} \le C(\epsilon)$.
\end{theorem}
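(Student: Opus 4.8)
Following the one-parameter scheme of \cite{u,dlwy}, the plan is to write down an explicit factorizing pair for which $g\,H_1H_2 f$ equals the atom $a$ identically. Let $R=I\times J$ be the rectangle supporting $a$, with centre $(x_I,x_J)$ and side lengths $\ell_1=|I|$, $\ell_2=|J|$, and fix a large parameter $D>1$, to be taken of size $\epsilon^{-1}$ at the end. Let $\wz I$, $\wz J$ be intervals of lengths $\ell_1$, $\ell_2$ whose centres lie at distance $D\ell_1$, $D\ell_2$ from those of $I$, $J$ respectively, put $\wz R=\wz I\times\wz J$, and set
\begin{align*}
f:=\frac{1}{|\wz R|}\chi_{\wz R}=\Big(\frac{1}{\ell_1}\chi_{\wz I}\Big)\otimes\Big(\frac{1}{\ell_2}\chi_{\wz J}\Big).
\end{align*}
Since $f$ is a tensor product, $H_1H_2 f=\phi_1\otimes\phi_2$ with each $\phi_i$ an explicit logarithm; a short computation shows $|\phi_i|\approx(D\ell_i)^{-1}$ on $I$ (resp.\ $J$) with relative oscillation $O(D^{-1})$, so $H_1H_2 f$ is bounded away from $0$ on $R$. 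Define $g:=a/(H_1H_2 f)$ on $R$ and $g:=0$ elsewhere. Then $g\in L^\infty(\R^2)$ has compact support, $\|g\|_\infty\ls D^2$, hence $\|f\|_\ltp\|g\|_\ltp\ls D^2$; and by construction $g\,H_1H_2 f=a$, so that
\begin{align*}
a-\Pi(f,g)=a-\big(g\,H_1H_2 f-f\,H_1H_2 g\big)=f\,H_1H_2 g.
\end{align*}
It therefore suffices to show $\|f\,H_1H_2 g\|_\hop<\epsilon$ for $D$ large.

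The second step bounds this $h^1$ norm by $\sup_{\wz R}|H_1H_2 g|$. The function $f\,H_1H_2 g$ is supported on $\wz R$, and it has mean zero: since each $H_i$ is skew-adjoint on $\ltp$ and the two commute, $(H_1H_2)^\ast=H_1H_2$, so
\begin{align*}
\int_{\R^2}f\,H_1H_2 g=\big\langle H_1H_2 f,\,g\big\rangle=\int_R(H_1H_2 f)\,\frac{a}{H_1H_2 f}=\int_{\R^2}a=0.
\end{align*}
Since $|f\,H_1H_2 g|=|\wz R|^{-1}|H_1H_2 g|$ on $\wz R$, the function $f\,H_1H_2 g$ equals $\sup_{\wz R}|H_1H_2 g|$ times an atom supported on $\wz R$, and $\|f\,H_1H_2 g\|_\hop\le\sup_{\wz R}|H_1H_2 g|$ follows from Definition \ref{def little h1}.

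To estimate $\sup_{\wz R}|H_1H_2 g|$ I would Taylor-expand the product kernel $\frac{1}{(x_1-y_1)(x_2-y_2)}$ about $(x_I,x_J)$: for $x\in\wz R$ and $y\in R$ all distances $|x_i-y_i|$ are $\approx D\ell_i$, and the three non-leading pieces pick up an extra factor $\ell_i(D\ell_i)^{-1}=D^{-1}$ relative to the crude bound $\|g\|_1\ls D^2\ell_1\ell_2$, hence are $\ls D^{-1}$. The leading piece is $\ls(D\ell_1)^{-1}(D\ell_2)^{-1}\big|\int_{\R^2}g\big|$, so everything comes down to the estimate $\big|\int_{\R^2}g\big|\ls D\ell_1\ell_2$. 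For this one writes $g=a\cdot(\psi_1\otimes\psi_2)$ on $R$, $\psi_i:=1/\phi_i$, and expands each $\psi_i$ about the centre of $I_i$: the constant term contributes $\psi_1(x_I)\psi_2(x_J)\int_{\R^2}a=0$ by the cancellation of $a$, while the remaining terms are controlled using $\|\psi_i\|_{L^\infty(I_i)}\approx D\ell_i$ together with the fact that $\psi_i$ varies by $O(\ell_i)$ across $I_i$, giving $\big|\int g\big|\ls D\ell_1\cdot\ell_2+D\ell_2\cdot\ell_1+\ell_1\ell_2\ls D\ell_1\ell_2$. Combining, $\sup_{\wz R}|H_1H_2 g|\ls D^{-1}$ with constant independent of $a$; taking $D\approx\epsilon^{-1}$ yields $\|a-\Pi(f,g)\|_\hop<\epsilon$ with $C(\epsilon)\ls\epsilon^{-2}$.

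The main obstacle is precisely the estimate $\big|\int_{\R^2}g\big|\ls D\ell_1\ell_2$: without invoking $\int a=0$ one has only $\big|\int g\big|\le\|g\|_1\ls D^2\ell_1\ell_2$, which makes the leading term of $H_1H_2 g$ on $\wz R$ of size $\approx1$ and the whole scheme breaks down. A single (full) cancellation condition on the atom is exactly what rescues it, which is why $h^1(\RR)$ built from rectangular atoms, rather than the genuinely bi-parameter product Hardy space, is the right setting. Finally, the construction is translation invariant and separately dilation invariant, and the Hilbert transforms enter only through crude size bounds on their kernels, so the same argument applies verbatim to the Riesz transforms and in the $n$-parameter case, as remarked after Corollary \ref{thm: lower bound}.
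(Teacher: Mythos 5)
Your proposal is correct, and it takes a genuinely different route from the paper at the key step. The paper also sends the atom to a far-away rectangle $\wz R$ and takes $f$ proportional to $\chi_{\wz R}$, but it defines $g:=a/\big(H_1H_2f(x_I,x_J)\big)$, dividing by the \emph{constant} value at the centre of $R$; the error then splits into two pieces $w_1=a-gH_1H_2f$ (supported on $R$) and $w_2=fH_1H_2g$ (supported on $\wz R$), neither of which has mean zero by itself, and a separate lemma (Lemma \ref{lem:H1Est}) is needed to chain the two rectangles through $\approx\log M$ dyadic dilations, producing the $\tfrac{\log M}{M}$ bound and hence $C(\epsilon)\approx\epsilon^{-2}\log^2(\epsilon^{-1})$. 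You instead divide by the \emph{function} $H_1H_2f$, so that $gH_1H_2f=a$ identically and the whole error collapses to the single term $fH_1H_2g$, supported on $\wz R$ alone; its exact cancellation $\int fH_1H_2g=\langle H_1H_2f,g\rangle=\int a=0$, obtained from the self-adjointness of $H_1H_2$ (which the paper itself uses in the proof of \eqref{claim ee}), makes it directly a multiple $\ls D^{-1}$ of a single rectangular atom. This buys you a cleaner proof with no analogue of Lemma \ref{lem:H1Est} and the slightly better constant $C(\epsilon)\approx\epsilon^{-2}$; the price is that you must verify a pointwise lower bound $|H_1H_2f|\approx(D^2\ell_1\ell_2)^{-1}$ on all of $R$ (not merely at its centre) together with the extra cancellation estimate $\big|\int g\big|\ls D\ell_1\ell_2$, and your oscillation bounds for $\phi_i$ and $\psi_i=1/\phi_i$ do establish both correctly. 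Your closing remark about Riesz transforms is also fine, with the same proviso the paper needs implicitly: $\wz R$ should be displaced along a coordinate direction so that the relevant kernel factors have fixed sign and size $\approx(D\ell_i)^{-1}$ on $R$, which is what makes the division by $H_1H_2f$ (or, in the paper, by its value at the centre) legitimate.
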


To prove Theorem \ref{thm:recfac}, we first provide a technical lemma as follows.

\begin{lemma} \label{lem:H1Est}
Let $R=I\times J$ be a rectangle in $\RR$ with center $(x_I,x_J)$. For every $\epsilon>0$, we choose
$M$ such that
\begin{align}\label{M}
\frac{\log M}{ M} <\ez.
\end{align}
Then define $\widetilde{R} = \widetilde{I}\times \widetilde{J}$ as another rectangle in $\RR$ center at $(x_{\widetilde{I}},x_{\widetilde{J}})$ and satisfy: $\ell(\widetilde{I})=\ell(I)$,  $\ell(\widetilde{J})=\ell(J)$ and
$|x_I-x_{\widetilde{I}}|=M\ell(I)$, $|x_J-x_{\widetilde{J}}|=M\ell(J)$.

Let $f: \mathbb{R}^2 \rightarrow \mathbb{C}$ and assume that
 $\textnormal{supp}\, f \subseteq R\cup \widetilde{R}.$
  Further, assume that
  $$ |f(x_1,x_2)| \ls  \frac1{M|R|} \Big(\chi_R(x_1,x_2)+\chi_{\widetilde{R}}(x_1,x_2)\Big) $$
  and that $f$ has mean zero property:
\begin{align}\label{mean zero var fir}
\int_{\RR} f(x_1,x_2) \,dx_1dx_2 = 0.
\end{align}
Then $\| f \|_{h^1(\mathbb{R} \times \mathbb{R})} \lesssim \epsilon,$
where the implicit constant is independent of $f$, $\epsilon$ and $M$.
\end{lemma}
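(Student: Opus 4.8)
The plan is to decompose $f$ into pieces that are (up to normalization) $h^1(\RR)$ atoms, and then count the number of such pieces together with their coefficients. Write $f = f_1 + f_2$ where $f_1 = f\chi_R$ and $f_2 = f\chi_{\widetilde R}$. Neither $f_1$ nor $f_2$ need have mean zero individually, but by \eqref{mean zero var fir} we have $\int f_1 = -\int f_2 =: c$, and the size bound gives $|c|\ls \frac1{M|R|}\cdot|R| = \frac1M$. Since $f_i$ is supported on a rectangle of measure $|R|$ with $\|f_i\|_\infty \ls \frac1{M|R|}$, after subtracting a constant multiple of $\chi_R/|R|$ (resp. $\chi_{\widetilde R}/|R|$) to restore mean zero, each corrected piece $\tilde f_i$ satisfies $\|\tilde f_i\|_\infty \ls \frac1{M|R|}$ and is a genuine atom times a constant of size $\ls \frac1M$. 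This accounts for an $h^1$-contribution of size $\ls \frac1M \ll \epsilon$. The difficulty is the leftover term $\frac{c}{|R|}(\chi_R - \chi_{\widetilde R})$: this has mean zero over $\RR$, but it is \emph{not} supported on a single rectangle — its support is $R\cup\widetilde R$, and these are separated by distance $\approx M\ell(I)$ in $x_1$ and $M\ell(J)$ in $x_2$.

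The heart of the argument is therefore to show that $\phi := \frac1{|R|}(\chi_R - \chi_{\widetilde R})$ has $\|\phi\|_{h^1(\RR)}\ls \log M$, so that the total leftover contributes $\ls \frac{\log M}{M} < \epsilon$ by \eqref{M}. To see this, I would build a "telescoping chain" of rectangles of fixed side lengths $\ell(I)\times\ell(J)$ connecting $R$ to $\widetilde R$. Concretely, one first slides in the $x_1$-direction: let $R^{(0)} = R, R^{(1)}, \dots, R^{(N_1)}$ be rectangles of the same dimensions whose centers march from $(x_I,x_J)$ toward $(x_{\widetilde I}, x_J)$ with consecutive centers at distance $\ell(I)$ (so $N_1 \approx M$), and then similarly slide in the $x_2$-direction with $N_2\approx M$ more rectangles to reach $\widetilde R$. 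Then
\begin{align*}
\chi_R - \chi_{\widetilde R} = \sum_{m} \big(\chi_{R^{(m-1)}} - \chi_{R^{(m)}}\big),
\end{align*}
a sum of roughly $2M$ terms. The naive bound gives $h^1$-norm $\ls M\cdot\frac1{|R|}\cdot|R| = M$, which is too lossy. The standard fix (this is exactly the mechanism in \cite{u}, and its product-setting refinement in \cite{dlwy}) is a dyadic grouping: group the chain into blocks of length $2^j$, so that $\chi_{R^{(0)}} - \chi_{R^{(2^{j+1})}}$, suitably normalized by $\frac1{2^{j}|R|}$, is an atom (mean zero, supported in a rectangle of measure $\approx 2^j|R|$, with the right sup bound); telescoping across the $\approx\log M$ dyadic scales produces $\approx\log M$ atoms each with coefficient $O(1)$, which is where the factor $\log M$ comes from. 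One does this separately for the horizontal and vertical sliding stages. Combining, $\|\phi\|_{h^1(\RR)}\ls\log M$.

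Putting the two parts together:
\begin{align*}
\|f\|_{h^1(\RR)} \le \|\tilde f_1\|_{h^1} + \|\tilde f_2\|_{h^1} + |c|\,\|\phi\|_{h^1(\RR)} \ls \frac1M + \frac1M\cdot\log M \ls \frac{\log M}{M} < \epsilon,
\end{align*}
with all implied constants absolute (independent of $f$, $\epsilon$, $M$), as claimed. The main obstacle, as indicated, is organizing the telescoping chain so that the dyadic grouping yields the clean $\log M$ (rather than $M$) count; the product structure means one must be careful that each grouped difference genuinely fits inside a \emph{rectangle} of the asserted size — which is why the sliding is done one variable at a time, keeping one side length fixed throughout each stage. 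Everything else is routine: checking atom normalizations and summing a convergent series.
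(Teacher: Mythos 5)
Your reduction --- writing $f=f_1+f_2$, correcting each piece to have mean zero at the price of a leftover $c\,\phi$ with $\phi=\frac{1}{|R|}(\chi_R-\chi_{\widetilde R})$ and $|c|\ls 1/M$ --- is sound, and it does reduce the lemma to showing $\|\phi\|_{h^1(\RR)}\ls\log M$. The gap is in your proof of that bound. In your sliding chain all rectangles $R^{(m)}$ have the \emph{same} measure $|R|$, so every grouped difference $\chi_{R^{(2^j)}}-\chi_{R^{(2^{j+1})}}$ still has sup norm $1$ while its supporting rectangle has measure $\approx 2^j|R|$. It is true that dividing by $2^j|R|$ produces an atom, but in the telescoping identity
\begin{align*}
\chi_{R^{(0)}}-\chi_{R^{(2^J)}}=\bigl(\chi_{R^{(0)}}-\chi_{R^{(1)}}\bigr)+\sum_{j=0}^{J-1}\bigl(\chi_{R^{(2^j)}}-\chi_{R^{(2^{j+1})}}\bigr)
\end{align*}
the scale-$j$ term enters $\phi$ with the fixed factor $1/|R|$, i.e.\ it equals $\approx 2^j$ times that atom, not $O(1)$ times it. Summing $2^j$ over $j\lesssim\log_2 M$ gives back $\approx M$, so the dyadic grouping as you set it up yields no gain over the naive count; the problem is the coefficients, not the number of scales.

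The mechanism that actually produces $\log M$ is to spread the unit mass over sets whose measure grows geometrically, so that the sup norm shrinks in step with the support: one compares $\chi_{2^{i-1}R}/|2^{i-1}R|$ with $\chi_{2^{i}R}/|2^{i}R|$ (dilates, not translates); each such difference is $O(1)$ times an atom, and only $i_0\approx\log_2 M$ doublings are needed before $2^{i_0}R$ and $2^{i_0}\widetilde R$ sit inside a common rectangle $\overline R$ of comparable measure, where the two tails cancel thanks to the global mean-zero hypothesis. (Equivalently, in your chain picture you would have to replace the single indicators by averages over blocks of $2^j$ consecutive rectangles.) This is exactly what the paper does, bypassing $\phi$ altogether: it telescopes the expanding averages $g_1^{i}=\frac{\chi_{2^{i}R}}{|2^{i}R|}\iint_R f_1$ (and similarly for $f_2$), obtaining $\approx\log_2 M$ atoms with coefficients $\ls 1/M$ from each piece plus two final atoms from the matched tails on $\overline R$, whence $\|f\|_{h^1(\RR)}\ls\frac{\log M}{M}<\epsilon$. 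So your outline is repairable, but as written the key step fails.
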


\begin{proof}
Suppose $f$ satisfies the conditions as stated in the lemma above. We will show that
$f$ has an atomic decomposition as the form in Definition \ref{def little h1}.  To see this, we first define two functions $f_1(x) $ and $f_2(x)$ by
\begin{align*}
&f_1(x_1,x_2)=f(x_1,x_2), (x_1,x_2)\in R; \quad f_1(x_1,x_2)=0, (x_1,x_2)\in \R^2\setminus R, \quad{\rm and}\\
& f_2(x_1,x_2)=f(x_1,x_2), (x_1,x_2)\in \widetilde{R}; \quad f_2(x_1,x_2)=0, (x_1,x_2)\in \R^2\setminus \widetilde{R}.
\end{align*}
Then we have $f=f_1+f_2$ and
$$ |f_1(x_1,x_2)| \ls  \frac1{M|R|} \chi_R(x_1,x_2)\quad {\rm and}\quad |f_2(x_1,x_2)| \ls  \frac1{M|R|}  \chi_{\widetilde{R}}(x_1,x_2). $$

Define
\begin{align*}
g_1^1(x_1,x_2)&:=\frac{\chi_{2R}(x_1,x_2)}{|2R|}\iint_{R}f_1(y_1,y_2)dy_1dy_2,\\
f_1^1(x_1,x_2)&:= f_1(x_1,x_2)- g_1^1(x_1,x_2),\\
\alpha_1^1&:=\|f_1^1\|_\infty |2R|.
\end{align*}
Then we claim that $a_1^1:= (\alpha_1^1)^{-1} f_1^1$  is a rectangle atom as in Definition \ref{def atom}.
First, it is direct that $a_1^1$ is supported in $2R$. Moreover, we have that
\begin{align*}
  \int_{\R^2} a_1^1(x_1,x_2) \,dx_1dx_2& =(\alpha_1^1)^{-1}\int_{\R^2} \left(f_1(x_1,x_2)- g_1^1(x_1,x_2) \right)dx_1dx_2\\
  &= (\alpha_1^1)^{-1}\bigg(\int_{\R^2} f_1(x_1,x_2)dx_1dx_2- \int_{\R^2} f_1(x_1,x_2)dx_1dx_2\bigg)\\
  &=0
\end{align*}
and that
\begin{align*}
\|a_1^1\|_\infty \leq |(\alpha_1^1)^{-1}|\|f_1^1\|_\infty = \frac1{|2R|}.
\end{align*}
Thus, $a_1^1$  is an atom as in Definition \ref{def atom}. Moreover, we have
\begin{align*}
|\alpha_1^1|=\|f_1^1\|_\infty |2R| \le \|f_1\|_\infty |2R| \ls \frac{1}{M |R|} \cdot |2R|\ls \frac{1}{M}.
\end{align*}
And
\begin{align*}
f_1(x_1,x_2)= f_1^1(x_1,x_2)+ g_1^1(x_1,x_2)= \alpha_1^1 a_1^1+ g_1^1(x_1,x_2).
\end{align*}
For $g_1^1(x_1,x_2)$, we further write it as
\begin{align*}
g_1^1(x_1,x_2)= g_1^1(x_1,x_2)- g_1^2(x_1,x_2)+ g_1^2(x_1,x_2)=:f_1^2(x_1,x_2)+g_1^2(x_1,x_2)
\end{align*}
with
$$g_1^2(x_1,x_2):=\frac{\chi_{4R}(x_1,x_2)}{|4R|}\iint_{R}f_1(y_1,y_2)dy_1dy_2. $$
Again, we define
\begin{align*}
\alpha_1^2&:=\|f_1^2\|_\infty |4R| \quad{\rm and}\quad a_1^2:= (\alpha_1^2)^{-1} f_1^2,
\end{align*}
and following similar estimates as for $a_1^1$, we see that
$a_1^2$  is an atom as in Definition \ref{def atom} with
$$ \|a_1^2\|_\infty \leq \frac{1}{|4R|}\quad \textnormal{ and } \quad \left\vert \alpha_1^2\right\vert \lesssim \frac{1}{M}. $$
Then we have
\begin{align*}
f_1(x_1,x_2)=\sum_{i=1}^2 \alpha_1^i a_1^i+ g_1^2(x_1,x_2).
\end{align*}
Continuing in this fashion we see that for $i \in \{1, 2, . . . , i_0\}$,
\begin{align*}
f_1(x_1,x_2)=\sum_{i=1}^{i_0} \alpha_1^i a_1^i+ g_1^{i_0}(x_1,x_2),
\end{align*}
where for $i \in \{2, . . . , i_0\}$,
\begin{align*}
g_1^{i}(x_1,x_2)&:=\frac{\chi_{2^{i}R}(x_1,x_2)}{|2^i R|}\iint_{R}f_1(y_1,y_2)dy_1dy_2,\\
f_1^{i}(x_1,x_2)&:= g_1^{i-1}(x_1,x_2)-g_1^{i}(x_1,x_2),\\
\alpha_1^i&:=\|f_1^i\|_\infty |2^iR| \quad{\rm and}\\
a_1^i&:= (\alpha_1^i)^{-1} f_1^i.
\end{align*}
Here we choose $i_0$ to be the smallest positive integer such that $\widetilde{R} \subset 2^{i_0}R$. Then from the
definition of $\widetilde{R}$, we obtain that
$$ i_0\approx \log_2 M. $$

Moreover, for $i \in \{1, 2, . . . , i_0\}$, we have
\begin{align*}
|\alpha_1^i|&\leq \|f_1^i\|_\infty |2^iR| \leq |2^iR| \big(\|g_1^{i-1}\|_\infty +\|g_1^{i}\|_\infty\big)\\
&\leq |2^iR|  \bigg(\frac{1}{|2^{i-1} R|}\iint_{R}|f_1(y_1,y_2)|dy_1dy_2 +{1\over |2^i R|}\iint_{R}|f_1(y_1,y_2)|dy_1dy_2\bigg)\\
&\ls |2^iR| \frac{1}{|2^{i-1} R|} \|f_1\|_\infty |R| \\
&\ls |R|\frac{1}{ M|R|}\\
&=\frac{1}{M}.
\end{align*}

Following the same steps, we also obtain that  for $i \in \{1, 2, . . . , i_0\}$,
\begin{align*}
f_2(x_1,x_2)=\sum_{i=1}^{i_0} \alpha_2^i a_2^i+ g_2^{i_0}(x_1,x_2),
\end{align*}
where for $i \in \{2, . . . , i_0\}$,
\begin{align*}
g_2^{i}(x_1,x_2)&:={\chi_{2^{i}R}(x_1,x_2)\over |2^i R|}\iint_{\widetilde R}f_2(y_1,y_2)dy_1dy_2,\\
f_2^{i}(x_1,x_2)&:= g_2^{i-1}(x_1,x_2)-g_2^{i}(x_1,x_2),\\
\alpha_2^i&:=\|f_2^i\|_\infty |2^iR| \quad{\rm and}\\
a_2^i&:= (\alpha_2^i)^{-1} f_2^i.
\end{align*}
Similarly, for $i \in \{1, 2, . . . , i_0\}$, we have
\begin{align*}
|\alpha_2^i|\ls {1\over M}.
\end{align*}

Combining the decompositions above, we obtain that
\begin{align*}
f(x_1,x_2)=\sum_{j=1}^2\sum_{i=1}^{i_0} \alpha_j^i a_j^i+ g_j^{i_0}(x_1,x_2).
\end{align*}
We now consider the tail $g_1^{i_0}(x_1,x_2) + g_2^{i_0}(x_1,x_2)$. To handle that, consider
the rectangle $\overline R$  centered at the point
$$ \Big( {x_I+x_{\widetilde{I}}\over 2}, {x_J+x_{\widetilde{J}}\over 2} \Big) $$
with sidelength $2^{i_0+1}\ell(I)$ and $2^{i_0+1}\ell(J)$. Then, it is clear that $R\cup \widetilde{R} \subset \overline R$,
and that $2^{i_0}R, 2^{i_0}\widetilde{R} \subset \overline R$.
Thus, we get that
\begin{align*}
{\chi_{\overline{R}}(x_1,x_2)\over |\overline{R}|}\iint_{\overline{R}}f_1(y_1,y_2)dy_1dy_2+
\ {\chi_{\overline{R}}(x_1,x_2)\over |\overline{R}|}\iint_{\overline{R}}f_2(y_1,y_2)dy_1dy_2=0.
\end{align*}
Hence, we write
\begin{align*}
g_1^{i_0}(x_1,x_2) + g_2^{i_0}(x_1,x_2)&=\bigg(g_1^{i_0}(x_1,x_2) -
{\chi_{\overline{R}}(x_1,x_2)\over |\overline{R}|}\iint_{\overline{R}}f_1(y_1,y_2)dy_1dy_2\bigg) \\
&\quad+ \bigg( g_2^{i_0}(x_1,x_2) -
 {\chi_{\overline{R}}(x_1,x_2)\over |\overline{R}|}\iint_{\overline{R}}f_2(y_1,y_2)dy_1dy_2\bigg)\\
 &=: f_1^{i_0+1}+f_2^{i_0+1}.
\end{align*}
For $j=1,2$, we now define
\begin{align*}
\alpha_j^{i_0+1}&:=\|f_j^{i_0+1}\|_\infty |2^{i_0+1}R| \quad{\rm and}\\
a_j^{i_0+1}&:= (\alpha_j^{i_0+1})^{-1} f_j^{i_0+1}.
\end{align*}
Again we can verify that for $j=1,2$, $a_j^{i_0+1}$ is an atom as in Definition \ref{def atom} with
$$\|a_j^{i_0+1}\|_\infty = {1\over  |2^{i_0+1}R| }.$$
Moreover, we also have
$$|\alpha_j^{i_0+1}| \ls {1\over M }.$$

Thus, we obtain that
\begin{align*}
f(x_1,x_2)=\sum_{j=1}^2\sum_{i=1}^{i_0+1} \alpha_j^i a_j^i,
\end{align*}
which implies that $f\in h^1(\RR)$ and
\begin{align*}
\|f\|_{h^1(\RR)} &\leq \sum_{j=1}^2\sum_{i=1}^{i_0+1}  |\alpha_j^i| \ls  \sum_{j=1}^2\sum_{i=1}^{i_0+1}  {1\over M}
\ls {\log M\over M}< \epsilon.
\end{align*}
Therefore, we finish the proof of Lemma \ref{lem:H1Est}.
\end{proof}

\bigskip
\begin{proof}[\bf Proof of Theorem \ref{thm:recfac}]  Suppose $a$ is an atom of $h^1(\RR)$ supported in a rectangle
$R$ centered at $(x_I, x_J)$, as in Definition \ref{def atom}.
For every fixed $\epsilon>0$, we
now let $M$, $\widetilde{R}$
be the same as in Lemma \ref{lem:H1Est}.

We define the two functions
\[ f(x_1,x_2): = \textbf{1}_{\widetilde{R}}(x_1,x_2) \ \text{ and } g(x) := \frac{a(x_1,x_2)}{H_1H_2f(x_I,x_J)}.\]
Then by definition, we have
\[ \|f \|_{\ltp} = |\widetilde{R}|^{\frac{1}{2}}= |R|^{\frac{1}{2}} \]
and
\[ \| g \|_{\ltp} = \frac{1}{ |   H_1H_2f(x_I,x_J) |} \| a \|_{\ltp} \le \frac{|R|^{-\frac{1}{2}}}{ |   H_1H_2f(x_I,x_J) |}.\]
Observe that
\[ |H_1 H_2 f(x_I,x_J) |= \left | \int_{\widetilde{R}} \frac{1}{x_I-y_1}\frac{1}{x_J-y_2 } dy_1 dy_2 \right| \approx \frac{1}{M^2}.\]
Thus, we have that
\[ \|f \|_{\ltp}  \| g \|_{\ltp} \leq CM^2 \]
with the positive constant $C$ independent of $a(x_1,x_2)$ and $M$.
We take $C(\epsilon)$  as
\begin{equation}\label{c-epsilon}
 C(\epsilon) := CM^2,
 \end{equation}
then it is easy to see that $C(\epsilon)$ depends only on $\epsilon$ as $M$ only depends on $\epsilon$.  Now, write
\begin{align*}
 a - \Pi(f,g)& = \left( a - g H_1H_2 f \right)  +f H_1H_2 g =:w_1(x) +w_2(x).
 \end{align*}

First, consider $w_1.$ Observe that $\text{supp } w_1 \subseteq R$ and
\[ |w_1(x_1,x_2) |  = \frac{|a(x_1,x_2)|}{ |H_1H_2f(x_I,x_J)|} \left| H_1H_2f(x_I,x_J) - H_1H_2f(x_1,x_2) \right|.\]
Then as $x \in R$,  we can estimate
\begin{align*}
 &\left| H_1H_2f(x_I,x_J) - H_1H_2f(x_1,x_2) \right|\\
  &\quad= \left|
 \int_{\widetilde{R}} \frac{1}{(y_1-x_I)(y_2-x_J)} - \frac{1}{(y_1-x_1)(y_2-x_2)} dy_1dy_2 \right| \\
&\quad \leq   \int_{\widetilde{R}} \frac{|x_1-x_I|}
{|y_1-x_I||y_1-x_1||y_2-x_J|}  + \frac{|x_2-x_J|}
{|y_1-x_1||y_2-x_2||y_2-x_J|} \dyz  \\
&\quad \leq   \int_{\widetilde{R}} \frac{\ell(I)}
{M^2\ell(I)^2 M\ell(J)}  + \frac{\ell(J)}
{M\ell(I) M^2\ell(J)^2} \dyz \\
&\quad\lesssim \frac{1}{M^3}.
\end{align*}
Combining this with the definition of $w_1$ immediately gives:
\begin{align*}
|w_1(x_1,x_2)|\ls {1\over M} |a(x_1,x_2)|,
\end{align*}
which implies that
\[ \| w_1\|_{\ltp} \ls \frac{1}{M}  \| a \|_{\ltp} \ls\frac1{M|R|^\frac12}.\]

Now, consider $w_2(x_1,x_2)$.
Note that
\[ w_2(x_1,x_2) =f (x_1,x_2)H_1H_2 g(x_1,x_2)  = \frac{1}{ H_1H_2f(x_I,x_J)} \textbf{1}_{\widetilde{R}} (x_1,x_2) H_1H_2 a(x_1,x_2). \]
Clearly, $\text{supp } w_2 \subseteq \widetilde{R}.$ Furthermore, using the mean zero property of $a(x_1,x_2)$, we have:
\begin{align*}
H_1H_2a(x_1,x_2) &= \int_{R} \frac{a(y_1,y_2)}{(y_1-x_1)(y_2-x_2)} dy_1dy_2 \\
&=\int_{R} \bigg( \frac{1}{(y_1-x_1)(y_2-x_2)} - \frac{1}{(x_I-x_1)(x_J-x_2)} \bigg) a(y_1,y_2) dy_1dy_2.
\end{align*}
It is immediate that
\begin{align*}
|  \textbf{1}_{\widetilde{R}} (x_1,x_2) H_1H_2a(x_1,x_2) | &\ls  \textbf{1}_{\widetilde{R}}(x_1,x_2) {1\over M^3} \|a\|_{L^\infty}.
 \end{align*}
Thus, we can conclude that
\begin{align*}
|w_2(x_1,x_2)|
&\ls \textbf{1}_{\widetilde{R}}(x_1,x_2) {1\over M} \|a\|_{L^\infty},
 \end{align*}
which implies that
\[ \|w_2 \|_{\ltp} \ls \frac1{M|R|^\frac12}.
\]

Combining the estimates of $w_1$ and $w_2$, we can conclude that $a - \Pi(f,g)$ has support contained in
$$R\cup \widetilde{R}$$ and satisfies
\[ \| a - \Pi(f,g) \|_{\ltp} \lesssim  \frac1{M|R|^\frac12}. \]
Moreover, from the definition of the bilinear form, we obtain that
$$ \int_{\R^2}\left( a(x_1,x_2) -  \Pi(f,g) (x_1,x_2) \right)dx_1dx_2=0.$$
Then, the fact that  $\| a - \Pi(f,g) \|_\hop \ls \epsilon$ now immediately follows from Lemma \ref{lem:H1Est}.
\end{proof}

\begin{remark}\label{remark compact}
From the proof of Theorem  \ref{thm:recfac} as above, we observe that the functions $f$ and $g$ that we constructed
are actually in $L^\infty(\R^2)$ with compact support.
\end{remark}

Now we provide the proof of the main result in this  paper. To begin with, we need the following two auxiliary lemmas.
\begin{lemma}\label{lemma aux1}
Suppose $b\in ${\rm bmo}$(\RR)$. Then we have
\begin{align}\label{aux1}
\big\| [  b, H_1H_2 ] \big\|_{L^2(\R^2)\to L^2(\R^2)} \ls \|b\|_{{\rm bmo}(\RR)},
\end{align}
where the implicit constant is independent of $b$.
\end{lemma}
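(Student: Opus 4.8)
The plan is to reduce the two-parameter commutator to one-parameter commutators acting on slices and then invoke the classical one-dimensional bound of Coifman--Rochberg--Weiss \cite{crw}. First I would record the operator identity
\[
[b,H_1H_2] = [b,H_1]\,H_2 + H_1\,[b,H_2],
\]
valid a priori on a dense class (say $L^2(\R^2)\cap L^\infty(\R^2)$ functions with compact support), which one checks by expanding both sides: the cross terms $H_1(bH_2f)$ cancel. Since $H_1$ and $H_2$ are each bounded on $\ltp$ — each acts as the one-dimensional Hilbert transform in one variable and as the identity in the other, so $L^2(\R^2)$-boundedness follows from the one-dimensional $L^2$ bound together with Fubini/Plancherel — it suffices to prove $\|[b,H_j]\|_{\ltp\to\ltp}\ls\|b\|_{\bmop}$ for $j=1,2$.

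Next I would exploit the structure of little BMO: by the characterization recalled just before Theorem \ref{1.1} (that $\bmop$ consists of the integrable functions that are uniformly of bounded mean oscillation in each variable separately), for a.e.\ fixed $x_2$ the slice $b(\cdot,x_2)$ lies in $\mathrm{BMO}(\R)$ with $\|b(\cdot,x_2)\|_{\mathrm{BMO}(\R)}\le\|b\|_{\bmop}$, and symmetrically in the other variable. For fixed $x_2$ the operator $[b,H_1]$ acts only in the first variable, namely
\[
[b,H_1]f(x_1,x_2) = \big[\,b(\cdot,x_2),H\,\big]\!\big(f(\cdot,x_2)\big)(x_1),
\]
so the one-dimensional Coifman--Rochberg--Weiss theorem gives
\[
\big\|[b,H_1]f(\cdot,x_2)\big\|_{L^2(\R)} \le C\,\|b(\cdot,x_2)\|_{\mathrm{BMO}(\R)}\,\|f(\cdot,x_2)\|_{L^2(\R)} \le C\,\|b\|_{\bmop}\,\|f(\cdot,x_2)\|_{L^2(\R)}.
\]
Squaring and integrating in $x_2$ (Fubini) yields $\|[b,H_1]f\|_{\ltp}\le C\,\|b\|_{\bmop}\,\|f\|_{\ltp}$, and the same argument with the roles of the two variables interchanged handles $[b,H_2]$. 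Combining with the displayed decomposition gives \eqref{aux1}.

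The points requiring care are bookkeeping rather than genuine obstacles: (i) justifying the identity and the manipulations on a dense class and extending by density, which is legitimate since the right-hand side is already known to define a bounded operator; (ii) the measurability in $x_2$ of $x_2\mapsto \|b(\cdot,x_2)\|_{\mathrm{BMO}(\R)}$ and of the relevant slices, which follows from standard arguments; and (iii) the uniformity in the slice of the constant from the one-dimensional theorem, which is automatic since that constant depends only on the dimension. If one wished to avoid slicing altogether, an alternative is to view the one-variable Hilbert transform as a Calder\'on--Zygmund convolution operator in the first variable tensored with the identity in the second, and to run the Coifman--Rochberg--Weiss argument directly in the product setting using that $b$ has uniformly bounded mean oscillation on rectangles; but the slicing argument is shorter, and I would present that one.
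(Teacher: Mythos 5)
Your argument is correct and follows essentially the same route as the paper: the identity $[b,H_1H_2]=H_1[b,H_2]+[b,H_1]H_2$, the slicewise characterization of $\bmop$ as uniform one-variable BMO, the Coifman--Rochberg--Weiss bound applied in each variable, and the $\ltp$-boundedness of $H_1$ and $H_2$ to absorb the extra Hilbert transforms. Your version merely spells out the Fubini/slicing step that the paper leaves implicit; no changes are needed.
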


\begin{proof}

We point out that the proof of upper bound of $[b,H_1H_2]$ follows directly from the property of bmo$(\RR)$ and the $L^2$ boundedness of the Hilbert transforms $H_1$ and $H_2$.

Suppose that $b\in $bmo$(\RR)$. Then we know that
for any fixed $x_2\in\R$, $b(x_1,x_2)$ as a function of $x_1$ is in the standard one-parameter BMO$(\R)$, symmetric result holds for the roles of $x_1$ and $x_2$ interchanged. Moreover, we further have that
\begin{align}\label{bmo norm equiv}
\|b\|_{\rm bmo(\RR)} \approx \sup_{x_1\in\R} \|b(x_1,\cdot)\|_{\rm BMO(\R)} +
 \sup_{x_2\in\R} \|b(\cdot,x_2)\|_{\rm BMO(\R)},
\end{align}
where the implicit constants are independent of the function $b$.

Next, we point out that
 $$[b, H_1H_2]= H_1[b,H_2] + [b,H_1]H_2.$$
 Then based on \eqref{bmo norm equiv} and the result of Coifman--Rochberg--Weiss \cite{crw}, we know that
\begin{align*}
&\big\|[b,H_2] \big\|_{L^2(\R^2)\to L^2(\R^2)} + \big\| [b,H_1]\big\|_{L^2(\R^2)\to L^2(\R^2)} \\
&\quad\ls \sup_{x_1\in\R} \|b(x_1,\cdot)\|_{\rm BMO(\R)} +
 \sup_{x_2\in\R} \|b(\cdot,x_2)\|_{\rm BMO(\R)}\\
&\quad\ls \|b\|_{{\rm bmo}(\RR)}.
\end{align*}

Then, denote by ${\rm Id}_1$ and ${\rm Id}_2$ the identity operator on $L^2(\R)$ for the first and second variable, respectively. We
further have
$$[b, H_1H_2]= (H_1\otimes {\rm Id}_2) \circ   [b,H_2] + [b,H_1] \circ ({\rm Id}_1\otimes H_2),$$
where we use $T_1 \circ T_2$ to denote the composition of two operators $T_1$ and $T_2$. Thus, we obtain that
\begin{align*}
&\big\| [  b, H_1H_2 ] \big\|_{L^2(\R^2)\to L^2(\R^2)} \\
&\quad=\big\| (H_1\otimes {\rm Id}_2) \circ   [b,H_2] + [b,H_1] \circ ({\rm Id}_1\otimes H_2) \big\|_{L^2(\R^2)\to L^2(\R^2)}\nonumber\\
&\quad \leq \big\| (H_1\otimes {\rm Id}_2) \circ  [b,H_2]  \big\|_{L^2(\R^2)\to L^2(\R^2)}  + \big\|[b,H_1]\circ ({\rm Id}_1\otimes H_2)  \big\|_{L^2(\R^2)\to L^2(\R^2)}\nonumber\\
& \quad\leq   \big\| H_1\|_{L^2(\R^2)\to L^2(\R^2)} \big\|[b,H_2] \big\|_{L^2(\R^2)\to L^2(\R^2)} \nonumber\\
&\quad\quad+ \big\| [b,H_1]\big\|_{L^2(\R^2)\to L^2(\R^2)}\big\|H_2 \big\|_{L^2(\R^2)\to L^2(\R^2)}\nonumber\\
&\quad\ls \|b\|_{{\rm bmo}(\RR)},\nonumber
\end{align*}
which shows that \eqref{aux1} holds.
\end{proof}


\begin{lemma}\label{lemma aux2}
Suppose $b\in ${\rm bmo}$(\RR)$, and $f,g \in L^\infty(\R^2)$ with compact supports.
Then the bilinear form $\Pi(f,g)$ defined as in \eqref{bilinear form} is in $h^1(\RR)$ with the norm satisfying
\begin{align}\label{aux2}
\| \Pi(f,g) \|_{h^1(\mathbb{R} \times \mathbb{R})} \ls \|f\|_{L^2(\R^2)} \| g \|_{L^2(\R^2)},
\end{align}
where the implicit constant is independent of $f$ and $g$.
\end{lemma}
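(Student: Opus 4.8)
The plan is to prove the lemma by duality, combining Theorem \ref{thm fs} with the commutator bound of Lemma \ref{lemma aux1}. First I would record two elementary observations. Since $f,g\in L^\infty(\R^2)$ have compact support they lie in $\ltp$, and the $L^2$-boundedness of $H_1$ and $H_2$ gives $H_1H_2f,\,H_1H_2g\in\ltp$; hence $\Pi(f,g)=gH_1H_2f-fH_1H_2g$ is a difference of products of two $L^2$ functions and so belongs to $L^1(\R^2)$, and since $H_1H_2$ is self-adjoint on $\ltp$ (the Hilbert transform being skew-adjoint and $H_1,H_2$ commuting) we also get $\int_{\R^2}\Pi(f,g)=0$. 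Second, if $b\in\bmop$ then, by the John--Nirenberg inequality in each variable separately (cf. the equivalence \eqref{bmo norm equiv}), $b$ is locally in $L^2$, so $bf,\,bg\in\ltp$ because $f,g$ are bounded with compact support.

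Next I would establish, for every $b\in\bmop$, the identity
\begin{align*}
\int_{\R^2} b\,\Pi(f,g)\,dx_1dx_2=\int_{\R^2} g\,[b,H_1H_2]f\,dx_1dx_2.
\end{align*}
To see this, expand $\Pi(f,g)$ and use the pointwise identity $bH_1H_2f=[b,H_1H_2]f+H_1H_2(bf)$, valid almost everywhere; multiplying by $g$ and integrating (each integral below converges absolutely by the observations above and Lemma \ref{lemma aux1}), and then transferring $H_1H_2$ off $bf$ onto $g$ by self-adjointness, one gets $\int g\,bH_1H_2f=\int g\,[b,H_1H_2]f+\int bf\,H_1H_2g$; subtracting $\int bf\,H_1H_2g$ gives the claimed identity. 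From it, Cauchy--Schwarz and Lemma \ref{lemma aux1} yield
\begin{align*}
\Big|\int_{\R^2} b\,\Pi(f,g)\Big|&\le \|g\|_\ltp\,\big\|[b,H_1H_2]f\big\|_\ltp\\
&\le \|g\|_\ltp\,\big\|[b,H_1H_2]\big\|_{\ltp\to\ltp}\|f\|_\ltp\ls \|b\|_{\bmop}\|f\|_\ltp\|g\|_\ltp.
\end{align*}

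Finally, I would invoke the duality: by Theorem \ref{thm fs}, $\hop=(\bmop)_*$, so the estimate above says that $\Pi(f,g)$ — which is already known to be an $L^1$ function — induces a bounded linear functional on $\bmop$ with norm $\ls\|f\|_\ltp\|g\|_\ltp$, and hence belongs to $\hop$ with $\|\Pi(f,g)\|_\hop\ls\|f\|_\ltp\|g\|_\ltp$, i.e. \eqref{aux2}. Here one uses the standard fact from the atomic theory recalled in the sketch of Theorem \ref{thm fs} (cf. \cite{CW2,J}) that an $L^1$ function pairing boundedly against $\bmop$ lies in $\hop$ with comparable norm.

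I expect the only point requiring real care to be the justification of the integration-by-parts identity in the middle step: $b\in\bmop$ is not globally square-integrable and $H_1H_2f$ is not compactly supported, so before applying self-adjointness of $H_1H_2$ one must verify absolute convergence of every integral involved, using the local $L^2$-integrability of $\bmop$ on the (compact) support of $g$ together with the global $L^2$-bounds on $H_1H_2f$, $H_1H_2(bf)$ and $[b,H_1H_2]f$. Everything else is a direct application of Theorem \ref{thm fs} and Lemma \ref{lemma aux1}.
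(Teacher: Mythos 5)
Your overall strategy is the same as the paper's: you prove the identity $\langle b,\Pi(f,g)\rangle_{L^2(\R^2)}=\langle [b,H_1H_2]f,g\rangle_{L^2(\R^2)}$ for $b\in\bmop$ (using local square-integrability of $b$ and the symmetry of $H_1H_2$ to move it onto $g$), apply Lemma \ref{lemma aux1} and Cauchy--Schwarz to get $|\langle b,\Pi(f,g)\rangle|\ls\|b\|_{\bmop}\|f\|_\ltp\|g\|_\ltp$, and then conclude by duality with $\bmop$. Your attention to the absolute convergence of the intermediate integrals is exactly the care the paper takes as well.

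The one step that does not go through as you state it is the last one. You invoke ``the standard fact that an $L^1$ function pairing boundedly against $\bmop$ lies in $\hop$ with comparable norm,'' attributing it to the duality theory behind Theorem \ref{thm fs}. That theory only gives $\bmop=(\hop)^*$ and the norm formula $\|h\|_{\hop}\approx\sup_{\|b\|_{\bmop}\le1}|\langle b,h\rangle|$ \emph{for $h$ already known to belong to $\hop$}; the dual of $\bmop$ is strictly larger than $\hop$ (indeed $\bmop$ is nonseparable), so a bounded pairing alone does not place an $L^1$ function in $\hop$, and for general $L^1$ functions the pairing need not even be absolutely convergent. In the one-parameter case one can rescue such a claim via the Hilbert-transform characterization of $H^1(\R)$, but for the sum-type predual of little bmo no such characterization is recalled in the paper, so as written this is a genuine gap. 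Fortunately the fix is short and is precisely what the paper does: since $f,g\in L^\infty(\R^2)$ have compact support, $\Pi(f,g)=gH_1H_2f-fH_1H_2g$ is not merely in $L^1$ but in $L^2(\R^2)$ with compact support (each term is a bounded compactly supported function times an $L^2$ function), and it has mean zero by the symmetry you already used; hence $\Pi(f,g)$ is a constant multiple of a $2$-atom and therefore lies in $\hop$ qualitatively. Only after this membership is secured may one apply the duality norm formula together with your estimate to obtain \eqref{aux2}. With that observation inserted, your argument coincides with the paper's proof.
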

\begin{proof}

We first note that for every $b\in $bmo$(\RR)$, $b$ is in $L^q_{loc}(\R^2)$ for $q\in(1,\infty)$. In fact,
for any compact set $\Omega$ in $\RR$, there exist two closed intervals $I,J\in\mathbb R$, such that
$\Omega\subset I\times J$. For any $x_1\in I$, we have $b(x_1,x_2)$ as a function of $x_2$ is in BMO$(\R)$.
Hence, $b(x_1,x_2)$ as a function of $x_2$ is in $L^q(J)$. Again, for any $x_2\in J$, $b(x_1,x_2)$ as a function of $x_1$ is in $L^q(I)$. As a consequence, we have that for any $q\in(1,\infty)$,
\begin{align*}
\int_{\Omega} |b(x_1,x_2)|^q dx_1dx_2 &\leq \int_{I}\int_{ J} |b(x_1,x_2)|^q dx_2dx_1
\leq  \int_{I}  \|b(x_1,\cdot)\|_{{\rm BMO}(\R)}^q  dx_1\\
&\leq \sup_{x_1\in I}\|b(x_1,\cdot)\|_{{\rm BMO}(\R)}^q \ |I|\\
&\leq C\|b\|^q_{\rm bmo(\RR)}  \ |I|,
\end{align*}
which shows that $b$ is in $L^q_{loc}(\R^2)$ for $q\in(1,\infty)$ with
\begin{align}\label{loc L2}
\int_{\Omega} |b(x_1,x_2)|^q dx_1dx_2
\leq C_\Omega\|b\|_{\rm bmo(\RR)}^q
\end{align}
for any compact set $\Omega \in \RR$.

We now consider the property of the bilinear form $\Pi(f,g)$ defined as in \eqref{bilinear form}.
For each $f,g \in L^\infty(\R^2)$ with compact support, we have that
$\Pi(f,g) = g H_1H_2 f - f H_1H_2 g$  is in $L^2(\R^2)$ with compact support.
In fact, since $f$ is in $L^\infty(\R^2)$ with compact support, we get that
$f$ is in $L^2(\R^2)$ with compact support, which implies that
 $H_1H_2 f $ is in $L^2(\R^2)$, and hence
$ g H_1H_2 f  $ is in $L^2(\R^2)$ with compact support. Similar argument holds for $f H_1H_2 g$.
Also note that from \eqref{loc L2}, for each $b\in {\rm bmo}(\RR)$, $b$ is in $L^2_{loc}(\R^2)$.
We have that
$$
\big\vert \left \langle b, \Pi(f,g)\right \rangle_{L^2(\R^2)} \big\vert =\Big| \int_{\RR}b(x_1,x_2) \Pi(f,g)(x_1,x_2)dx_1dx_2 \Big| \leq C\|b\|_{\rm bmo(\RR)}<\infty,
$$
where the constant $C$ depends on the support of $f$ and $g$. Hence $\left \langle b, \Pi(f,g)\right \rangle_{L^2(\R^2)}$ is well-defined.

Next we claim that for each $f,g \in L^\infty(\R^2)$ with compact support,
\begin{align}\label{claim ee}
 \left \langle b, \Pi(f,g)\right \rangle_{L^2(\R^2)}
& =  \left \langle  \left[  b, H_1 H_2 \right] f ,g \right \rangle_{L^2(\R^2)}.
\end{align}
To see this, note that by definition of $\Pi(f,g)$,
\begin{align*}
 \left \langle b, \Pi(f,g)\right \rangle_{L^2(\R^2)}
& =  \left \langle b, g H_1H_2 f - f H_1H_2 g \right \rangle_{L^2(\R^2)}.
\end{align*}
Next, since $f,g \in L^\infty(\R^2)$ with compact support and $b\in L^2_{loc}(\R^2)$, it is direct that
\begin{align*}
 \left \langle b, g H_1H_2 f \right \rangle_{L^2(\R^2)} =  \left \langle g, b H_1H_2 f \right \rangle_{L^2(\R^2)}
\end{align*}
and that
\begin{align*}
  \left \langle b,  f H_1H_2 g \right \rangle_{L^2(\R^2)} &= \int_{\RR} b(x_1,x_2)f(x_1,x_2) H_1H_2 g(x_1,x_2)dx_1dx_2 \\
  &= \int_{\RR} H_1H_2 (b\cdot f)(x_1,x_2)  g(x_1,x_2)dx_1dx_2 \\
  &= \left \langle H_1H_2(b \cdot f) , g \right \rangle_{L^2(\R^2)}.
\end{align*}
Combining these two equalities, we get that the claim \eqref{claim ee} holds.

From the claim \eqref{claim ee} and the upper bound as in \eqref{aux1}, we obtain that
\begin{align}\label{predual}
\left\vert \left \langle b, \Pi(f,g)\right \rangle_{L^2(\R^2)} \right\vert
& = \left\vert \left \langle  \left[  b, H_1 H_2 \right] f ,g \right \rangle_{L^2(\R^2)}\right\vert
 \lesssim \|b \|_{{\rm bmo}(\RR)} \|f \|_{L^2(\mathbb{R}^2)} \|g \|_{L^2(\mathbb{R}^2)},
\end{align}
where the implicit constant is independent of $f$ and $g$.

Now for any fixed $f,g\in L^\infty(\R^2)$ with compact support, we claim that $\Pi(f,g)$ is in $h^1(\RR)$.

To see this, we now show that $\Pi(f,g)$ is the product of a constant and a $2$-atom of $h^1(\RR)$.
In fact,  from the definition of the bilinear form, we obtain that
$$ \int_{\R^2} \Pi(f,g) (x_1,x_2) dx_1dx_2=0.$$
Next,  since both $f$ and $g$ are in $L^\infty(\R^2)$ with compact support,
we get that $\Pi(f,g) \in L^2(\RR)$ with compact support, denoted it by a rectangle $R\subset \RR$. And we further have 
$ \|\Pi(f,g)\|_{L^2(\R^2)}\leq C_{f,g} \|g\|_{L^\infty(\R^2)} \|f\|_{L^\infty(\R^2)} $, where the constant $C_{f,g}$ depends on the compact supports of $f$ and $g$. Moreover, we assume that $\|\Pi(f,g)\|_{L^2(\R^2)}\not=0$ since otherwise $\Pi(f,g)=0$ almost everywhere and hence it is in  $h^1(\RR)$.

Now we can write 
$$ \Pi(f,g)(x_1,x_2) 
=: a(x_1,x_2) \cdot  \|\Pi(f,g)\|_{L^2(\R^2)} |R|^{1\over2},$$
where
$$ a(x_1,x_2) := {\Pi(f,g)(x_1,x_2)\over \|\Pi(f,g)\|_{L^2(\R^2)} |R|^{1\over2} }.$$
Then it is direct that $a(x_1,x_2) $ is supported in $R$, $\int_{\RR} a(x_1,x_2) dx_1dx_2=0$ and that
$ \|a\|_{L^2(\R^2)}\leq |R|^{-{1\over2}} $. Hence $a(x_1,x_2)$ is a $2$-atom of $h^1(\RR)$, which implies that 
$\Pi(f,g)$ is in $h^1(\RR)$, i.e., the claim holds.

Note that $\Pi(f,g)$ is in $h^1(\RR)$, we then further have
\begin{align*}
\|h\|_{h^1(\mathbb{R} \times \mathbb{R})} \approx \sup_{\|b\|_{\rm bmo (\mathbb R\times\mathbb R)} \leq 1} \big|\langle b,h \rangle\big|,
\end{align*}
which follows from the fundamental fact as in 1.4.12 (b) in \cite{Gra}.

This, together with \eqref{predual}, immediately implies that \eqref{aux2} holds.
\end{proof}

We now provide the proof of our main result.
\begin{proof}[\bf Proof of Theorem \ref{thm: main}]
We first point out from Remark \ref{remark main compact}, the functions $g_j^k$ and $h^k_j$ in the representation \eqref{represent of H1}
are actually in $L^\infty(\R^2)$ with compact support.
Hence, from \eqref{aux2},
%
for every $f\in\hop$  having the representation \eqref{represent of H1} with
$$\sum_{k=1}^\fz\sum_{j=1}^\fz\lf|\az^k_j\r|\lf\|g^k_j\r\|_\ltp\lf\|h^k_j\r\|_\ltp<\fz,$$
 it follows that
\begin{eqnarray*}
\|f\|_\hop&\ls&\inf\lf\{\sum_{k=1}^\fz\sum_{j=1}^\fz\lf|\az^k_j\r|\lf\|g^k_j\r\|_\ltp\lf\|h^k_j\r\|_\ltp:
f=\sum_{k=1}^\fz\sum_{j=1}^\fz \az^k_j\,\Pi\lf(g^k_j,h^k_j\r)\r\}.
\end{eqnarray*}

It remains to show that for each $f\in\hop$, $f$ has a representation as in \eqref{represent of H1} with
\begin{equation}\label{lower bound of H1 respent}
\inf\lf\{\sum_{k=1}^\fz\sum_{j=1}^\fz\lf|\az^k_j\r|\lf\|g^k_j\r\|_\ltp\lf\|h^k_j\r\|_\ltp:
f=\sum_{k=1}^\fz\sum_{j=1}^\fz \az^k_j\,\Pi\lf(g^k_j,h^k_j\r)\r\}\ls \|f\|_\hop.
\end{equation}
To this end, assume that $f$ has the following atomic representation
 $\displaystyle f=\sum_{j=1}^\fz\az^1_ja^1_j$ with $\displaystyle\sum_{j=1}^\fz|\az^1_j|\le C_0\|f\|_\hop$
 for certain absolute constant $C_0\in(1, \fz)$.
We show that for every $\epsilon\in\left(0, C_0^{-1}\right)$ and every $K\in\nn$, $f$ has the following representation
\begin{equation}\label{itration}
f=\sum_{k=1}^K\sum_{j=1}^\fz\az^k_j\,\Pi\lf(g^k_j, h^k_j\r)+E_K,
\end{equation}
where
\begin{equation}\label{itration-2}
\sum_{j=1}^\fz\lf|\az^k_j\r|\le  \ez^{k-1}C_0^k\|f\|_\hop,
\end{equation}
and $E_K\in \hop$ with
\begin{equation}\label{itration-3}
\|E_K\|_\hop\le (\ez C_0)^K\|f\|_\hop,
\end{equation}
and $g^k_j\in\ltp$, $h^k_j\in\ltp$ for each $k$ and $j$,
$\{\az^k_j\}_{j}\in \ell^1$ for each $k$ satisfying that
\begin{equation}\label{itration-1}
\lf\|g^k_j\r\|_\ltp\lf\|h^k_j\r\|_\ltp\ls C(\ez)
\end{equation}
with the absolute constant $C(\ez)$ defined as in \eqref{c-epsilon}.

In fact, for given $\ez$ and each $a^1_j$,  by Theorem \ref{thm:recfac} we obtain that
 there exist
 $g^1_j\in\ltp$ and $h^1_j\in\ltp$ with
 $$\lf\|g^1_j\r\|_\ltp\lf\|h^1_j\r\|_\ltp\ls C(\ez)$$
  and
 $$\lf\| a^1_j-\Pi\lf(g^1_j,h^1_j\r)\r\|_\hop<\ez.$$
Actually, from Remark \ref{remark compact}, these two functions $g^1_j$ and $h^1_j$
are in $L^\infty(\R^2)$ with compact supports.

Now we write
\begin{align*}
f&=\sum_{j=1}^\fz\az^1_ja^1_j
=\sum_{j=1}^\fz\az^1_j\Pi\lf(g^1_j,h^1_j\r)+ \sum_{j=1}^\fz\az^1_j\lf[ a^1_j-\Pi\lf(g^1_j,h^1_j\r)\r]\\
&=:M_1+E_1.
\end{align*}
Observe that
\begin{align*}
\|E_1\|_\hop&\le \sum_{j=1}^\fz\lf|\az^1_j\r| \lf\| a^1_j-\Pi\lf(g^1_j,h^1_j\r)\r\|_\hop \le \ez C_0\|f\|_\hop.
\end{align*}

Since $E_1\in\hop$, for the given $C_0$,
there exists a sequence of atoms $\{a^2_j\}_j$ and numbers $\{\az^2_j\}_j$
such that $\displaystyle E_1=\sum_{j=1}^\fz\az^2_ja^2_j$ and
\begin{equation*}
\sum_{j=1}^\fz\lf|\az^2_j\r|\le C_0\|E_1\|_\hop\le \ez C_0^2\|f\|_\hop.
\end{equation*}
Again,  we have that for given $\ez$, there exists a representation of $E_1$ such that
\begin{align*}
E_1&=\sum_{j=1}^\fz\az^2_j\Pi\lf(g^2_j,h^2_j\r)+ \sum_{j=1}^\fz\az^2_j\lf[ a^2_j-\Pi\lf(g^2_j,h^2_j\r)\r]
\\
&=:M_2+E_2,
\end{align*}
and
\begin{equation*}
\lf\|g^2_j\r\|_\ltp\lf\|h^2_j\r\|_\ltp\ls C(\ez)\,\ {\rm and}\,\, \lf\| a^2_j-\Pi\lf(g^2_j, h^2_j\r)\r\|_\hop<\frac{\ez}{2}.
\end{equation*}
Moreover,
\begin{eqnarray*}
\|E_2\|_\hop&\le& \sum_{j=1}^\fz\lf|\az^2_j\r| \lf\| a^2_j-\Pi\lf(g^2_j,h^2_j\r)\r\|_\hop
\le (\ez C_0)^2\|f\|_\hop.
\end{eqnarray*}
Now we conclude that
\begin{eqnarray*}
f=\sum_{j=1}^\fz\az^1_ja^1_j=\sum_{k=1}^2\sum_{j=1}^\fz\az^k_j\Pi\lf(g^k_j, h^k_j\r)+E_2,
\end{eqnarray*}
Again,  from Remark \ref{remark compact}, all these functions $g^k_j$ and $h^k_j$
are in $L^\infty(\R^2)$ with compact supports.

Continuing in this way, we deduce that for every $K\in\nn$, $f$ has the representation \eqref{itration} satisfying
\eqref{itration-1}, \eqref{itration-2}, and \eqref{itration-3}. Thus letting $K\to\fz$, we
see that \eqref{represent of H1} holds. Moreover, since $\ez C_0<1$, we have that
$$\sum_{k=1}^\fz\sum_{j=1}^\fz \lf|\az^k_j\r|\le \sum_{k=1}^\fz\ez^{-1}(\ez C_0)^k\|f\|_\hop\ls \|f\|_\hop,$$
which implies \eqref{lower bound of H1 respent} and hence, completes the proof of Theorem  \ref{thm: main}.
\end{proof}

Next, by duality, we provide the proof of our second main result in this  paper.

\begin{proof}[\bf Proof of Corollary \ref{thm: lower bound}]
Suppose that $b\in \cup_{q>1}L^q_{loc}(\R^2)$. Assume that $\mbhh$ is bounded on $\ltp$  and $f\in\hop$ and $f$ has compact support.
From Theorem \ref{thm: main},
we deduce that
\begin{eqnarray*}
\langle b, f\rangle_{L^2(\mathbb{R}^2)}&=&\sum_{k=1}^\fz\sum_{j=1}^\fz \az^k_j \lf\langle b, \Pi\lf(g^k_j,h^k_j\r)\r\rangle_{L^2(\mathbb{R}^2)}
=\sum_{k=1}^\fz\sum_{j=1}^\fz \az^k_j\lf\langle g^k_j,\mbhh h^k_j\r\rangle_{L^2(\mathbb{R}^2)},
\end{eqnarray*}
where in the second equality we have applied
the fact that
$$ \lf\langle b, \Pi\lf(g^k_j,h^k_j\r)\r\rangle_{L^2(\mathbb{R}^2)} =  \lf\langle g^k_j,\mbhh h^k_j\r\rangle_{L^2(\mathbb{R}^2)},$$
which follows from \eqref{claim ee} since the functions $g^k_j,h^k_j$ here are constructed as in $L^\infty(\R^2)$ with compact support (see Remark \ref{remark compact}).

This implies that
\begin{eqnarray*}
\lf|\langle b, f\rangle_{L^2(\mathbb{R}^2)}\r|
&&\le\sum_{k=1}^\fz\sum_{j=1}^\fz \lf|\az^k_j\r|\lf\|g^k_j\r\|_\ltp\lf\|\mbhh h^k_j\r\|_\ltp\\
&&\le\lf\|\mbhh:\ltp\to\ltp \r\|\ \sum_{k=1}^\fz \sum_{j=1}^\fz \lf|\az^k_j\r|\lf\|g^k_j\r\|_\ltp\lf\|h^k_j\r\|_\ltp\\
& &\ls\lf\|\mbhh:\ltp\to\ltp\r\|\|f\|_\hop.
\end{eqnarray*}
Then by the fact that $\{f\in \hop: \,f {\rm\ has\ compact\ support} \}$ is dense in $\hop$, and
the duality between $\hop$ and $\bmop$ (see \cite{fs}), we finish the proof of Corollary \ref{thm: lower bound}.
\end{proof}

\bigskip
{\bf Acknowledgement: } The authors would like to thank the editor for his patience and thank the referee for careful reading and checking, and for all the helpful
suggestions and comments, which helps to make this paper more accurate and readable.

The second author would like to thank Professor Jill Pipher for pointing out this question during her visit to Macquarie University in March 2016.

The second and third authors would like to thank the University of Hawaii for hospitality while visiting there in April 2016.


\begin{thebibliography}{1234}




\bibitem{ais} K. Astala, T. Iwaniec and E. Saksman,  Beltrami operators in the plane, {\em Duke Math. J.}, {\bf107} (2001), 27--56.

%
%



\bibitem{crw}
 R. R. Coifman, R. Rochberg and G. Weiss,
 Factorization theorems for Hardy spaces in several variables, {\em Ann. of Math. (2)}, {\bf103} (1976), 611--635.


\bibitem{clms}
R. R. Coifman, R., P.-L. Lions, Y. Meyer, and S. Semmes,
Compensated compactness and Hardy spaces, {\em J. Math. Pures Appl. (9)}, {\bf 72} (1993) 247--286.



\bibitem{CW2}
    {R.R.~Coifman and G.~Weiss}, %
    Extensions of Hardy spaces and their use in analysis, %
    \emph{Bull.\ Amer.\ Math.\ Soc.}~\textbf{83} (1977), 569--645.


\bibitem{CS2}
M. Cotlar and C. Sadosky, Two distinguished subspaces of product BMO and Nehari-AAK theory for Hankel operators on the torus, {\em Integral Equations Operator Theory}  {\bf 26} (1996),  273--304.


\bibitem{dlwy} X. Duong, J. Li, B. D. Wick and D. Yang, Factorization for Hardy spaces and characterization for BMO spaces via commutators in the Bessel setting, {\em Indiana University Mathematics Journal}, to appear 2016.

\bibitem{fs72} C. Fefferman and E. Stein, $H^p$-spaces of several variables, {\em Acta Math.}, {\bf129} (1972), 137--193.

%
%


\bibitem{fl}
S.~H. Ferguson and M.~T. Lacey,
\newblock A characterization of product {BMO} by commutators,
\newblock {\em Acta Math.}, {\bf189} (2002), 143--160.

\bibitem{fs}
S.~H. Ferguson and C. Sadosky,
Characterizations of bounded mean oscillation on the polydisk in terms of Hankel operators and Carleson measures,
\emph{J. Anal. Math.} \textbf{81} (2000), 239--267.




\bibitem{Gra} L. Grafakos, Classical Fourier Analysis, Second Edition, Graduate Texts in Mathematics, 249.

 \bibitem{J} J.-L. Journ\'e,  Calder\'on-Zygmund Operators, Pseudodifferential Operators And the Cauchy Integral of Calder\'on. Lecture Notes in Mathematics, 994. Springer--Verlag, Berlin, 1983.



\bibitem{kpv} C. E. Kenig, G. Ponce and L. Vega, Well-posedness of the initial value problem for the Korteweg-de Vries equation, {\em J. Amer. Math. Soc.}, {\bf4} (1991), 323--347.


\bibitem{l} M. Lacey, Lectures on Nehari's theorem on the polydisk, Topics in harmonic analysis and ergodic theory,  {\em Contemp. Math.}, {\bf444} (2007), 185--213.

\bibitem{lppw} M. Lacey, S. Petermichl, J. Pipher and B. D. Wick,  Multiparameter Riesz commutators,
\emph{Amer. J. Math.}, {\bf131} (2009), 731--769.



\bibitem{n} Z. Nehari, On bounded bilinear forms, {\em Ann. of Math. (2)}, {\bf 65} (1957), 153--162.

\bibitem{ops} Y. Ou, S. Petermichl and E. Strouse, Higher order Journe commutators and characterizations of multi-parameter BMO, {\em Adv. Math.}, {\bf 291} (2016), 24--58.


\bibitem{St} %
    {E.M.~Stein}, %
    {Harmonic Analysis: Real-variable Methods, Orthogonality,
    And Oscillatory Integrals}, %
    Princeton University Press, Princeton, NJ, 1993. %




\bibitem{u} A. Uchiyama, The factorization of $H^p$ on the space of homogeneous type, {\em Pacific J. Math.}, {\bf92} (1981),  453--468.


\end{thebibliography}
\end{document}